\newtheorem{thm}{Theorem}[section]
\newtheorem{prop}[thm]{Proposition}
\newtheorem{lem}[thm]{Lemma}
\newtheorem{defn}[thm]{Definition}
\newtheorem{remark}[thm]{Remark}
\newtheorem{example}[thm]{Example}
\renewcommand{\thethm}{\thesection.\arabic{thm}}
\renewcommand{\thesection}{\arabic{section}}
\newenvironment{proof}
    {\par\noindent{\it Proof. }\nopagebreak\normalsize}%
    {\hfill\linebreak[2]\hspace*{\fill}$\square$\\ }
\par\noindent{\it Proof of Theorem \em \ref{main0}.\em}\nopagebreak\normalsize}%
\hfill\linebreak[2]\hspace*{\fill}$\square$\\ }
\par\noindent{\it Proof of Theorem \em \ref{main0}.\em}\nopagebreak\normalsize}%
\hfill\linebreak[2]\hspace*{\fill}$\square$\\ }
      \newcommand{\N}{{\mathbb N}}
      \newcommand{\R}{{\mathbb R}}
\newcommand{\nmu}{{\overline{\mu}}}
\newcommand{\LIP}{\operatorname{LIP}}
\newcommand{\Mod}{\operatorname{Mod}}
\newcommand{\veps}{\varepsilon}
\newcommand{\jint}{\int\hspace{-4.15mm}\frac{\,\,\,}{}}
\newcommand{\jintb}{\int\hspace{-2.75mm}\frac{\,\,}{}}
\newcommand{\AM}{\operatorname{AM}}
\newcommand{\BV}{\operatorname{BV}}
\newcommand{\eps}{\varepsilon}
\newcommand{\pip}{\varphi}
\author{Estibalitz Durand-Cartagena, Sylvester Eriksson-Bique,\\ 
Riikka Korte, Nageswari Shanmugalingam\footnote{S.E.-B.~was partially supported by
grant DMS\#-1704215 of NSF(U.S.), R.K.~was supported by Academy of Finland grant number 308063, N.S.~was partially supported by
grant DMS\#-1500440 of NSF(U.S.A.), and E. D-C. 's research is partially supported by the grants 
MTM2015-65825-P (MINECO of Spain) and 2018-MAT14 (ETSI Industriales, UNED). Part of the research 
was done during the visit of the second and fourth authors to
Aalto University and Link\"oping University, and during the visit of the third and fourth authors to Universidad Complutense
de Madrid and UNED; the authors wish to thank these institutions for their kind hospitality.
We also thank
Olli Martio for many valuable discussions related to this subject and for sharing his early manuscript~\cite{HMM2}.}}
\begin{document}
\title{Equivalence of two BV classes of functions in metric spaces, and existence of a Semmes family of curves
under a $1$-Poincar\'e inequality}


\maketitle

\begin{abstract}
We consider two notions of functions of bounded variation in complete metric measure spaces,
one due to Martio~\cite{M1,M2} and the other due to Miranda~Jr.~\cite{Mi}. We show that these two notions 
coincide, if the measure is doubling and supports a $1$-Poincar\'e inequality. In doing so, we also prove that if the measure is doubling and supports a $1$-Poincar\'e inequality,
then the metric space supports a \emph{Semmes family of curves} structure. 
\end{abstract}

Key words: AM-modulus, bounded variation, $1$-Poincar\'e inequality, metric measure space, Semmes pencil of curves.

MSC classification: 26A45, 30L99, 31E05.

\section{Introduction}

Given $1\le p<\infty$, a function $u$ in $L^p(\R^n)$ is in $W^{1,p}(\R^n)$ if and only if $u$ has an 
$L^p$-representative that is absolutely continuous on almost every non-constant compact rectifiable
curve in $\R^n$ with derivative in $L^p(\R^n)$, see~\cite{Va} for an in-depth discussion on this.
Equivalently, $u\in W^{1,p}(\R^n)$ if and only if $u\in L^p(\R^n)$ and there is a non-negative Borel
function $g\in L^p(\R^n)$ such that for all non-constant compact rectifiable curves $\gamma$ in $\R^n$,
\begin{equation}\label{upgrad}
|u(\gamma(b))-u(\gamma(a))|\le \int_a^b g\circ\gamma(t)|\gamma^\prime(t)|\, dt.
\end{equation}
On the other hand, the class of BV functions on $\R^n$ has 
a more complicated analog; there should be
a sequence $f_k\in W^{1,1}(\R^n)$, with $
f_k\to u$ in $L^1(\R^n)$ and function $g_k$ 
associated with $f_k$ 
as in the inequality above, such that $\liminf_{k\to\infty} \int_{\R^n} g_k\, dx$ is finite. 
Thus to verify that
a function $u$ belongs to the class $\BV(\R^n)$ we need a sequence of 
\emph{pairs} of functions $(f_k,g_k)$ 
satisfying \eqref{upgrad}, where $f_k$ approximates $u$ in 
$L^1(\R^n)$, whereas to
define a function in $W^{1,1}(\R^n)$ we only need a single 
energy function $g$ that satisfies \eqref{upgrad}.

The above complication carries through from $\R^n$ to more general metric measure spaces $X$, and so
while we need only the energy function $g$ in order to know that $u$ is in the Sobolev class, to know that
$u$ is in the BV class we need both, the approximating sequence $f_k$ as well as the corresponding energy functions $g_k$. To avoid this 
discrepancy, the recent work of Martio~\cite{M1, M2} proposed a new definition of BV functions 
in the Euclidean and general metric measure setting,
denoted in the current paper by $\BV_{\AM}(X)$, see Definition \ref{def:BV-AM}.
In this notion one needs a single sequence of ``energy" functions $g_k$ associated
with the function $u$ in a specific manner in order to determine whether $u\in\BV_{\AM}(X)$.
The backbone
of the construction of $\BV_{\AM}(X)$ is the notion of $\AM$-modulus, and it appears that this modulus is
better suited to the study of sets of finite perimeter than the standard $1$-modulus. It is 
shown in~\cite[Theorem~11]{HMM2} that Euclidean Borel sets $E$ are of finite perimeter if and only if
the $\AM$-modulus of the collection of all curves that cross the measure-theoretic boundary $\partial_*E$ of
$E$ is finite; and in this case the perimeter measure of $E$ is precisely the $\AM$-modulus of that collection 
of curves. This is  a variant of the Federer characterization of sets of finite perimeter. Federer proved
that a measurable set $E\subset\R^n$ is of finite perimeter if and only if $\mathcal{H}^{n-1}(\partial_*E)$ is finite;
a new, potential-theoretic proof of this characterization, valid even in the metric setting, can be found in~\cite{L}.

The goal of this paper is to show that if the metric measure space $X$ is of controlled geometry, that is,
if $X$ is complete,
the measure $\mu$ is doubling and supports a $1$-Poincar\'e inequality, then the notion of
$\BV_{\AM}(X)$ from~\cite{M1,M2} gives the same function space as the BV class $\BV(X)$ as
defined by Miranda~Jr.~in~\cite{Mi}. To do so we also prove that if $\mu$ is doubling, then
$X$ supports a $1$-Poincar\'e inequality if and only if $X$ supports a \emph{Semmes family of curves}
corresponding to each pair $x,y\in X$ of points, that is, there is a family $\Gamma_{xy}$ of 
quasiconvex curves connecting $x$ to $y$ and a probability measure $\sigma_{xy}$ on
$\Gamma_{xy}$ satisfying a Riesz-type inequality, see Definition~\ref{def:semmes} below.
This auxiliary result is of independent interest. 
The notion of Semmes family of curves,
first proposed in~\cite{Se} (where clearly it was not termed a ``Semmes family"), is known to imply
the support of a $1$-Poincar\'e inequality, see the discussion in~\cite[page~29]{He}. In this paper we show
that the converse also holds true, that is, if the measure is doubling and supports a $1$-Poincar\'e inequality, then
it supports a Semmes family of curves structure. 
Thus, our paper also characterizes the support of a $1$-Poincar\'e inequality (in doubling complete metric measure
spaces) via the existence of a Semmes family of
curves.  A recent preprint~\cite{Fass-Orp} gives another characterization of the support of a $1$-Poincar\'e inequality in
terms of the existence of normal $1$-currents for each pair of points $x,y\in X$, in the sense of Ambrosio and Kirchcheim, such that the
mass of the current is controlled by the Riesz measure $R_{xy}$, see~\eqref{eq:Riesz} below. For the study comparing
BV-AM spaces with BV classes of functions, a Semmes family of curves seems to be more useful.

The equality of $\BV(X)$ with $\BV_{\AM}(X)$ and the
equivalence between the Semmes family of curves structure and the $1$-Poincar\'e inequality form
the two main results in this paper, see Theorem~\ref{thm:main1}.

\section{Two definitions of BV functions}

In the rest of the paper, $(X,d,\mu)$ is a 
{\em metric measure space}, where $(X,d)$ is a complete 
metric space and $\mu$ is a Borel measure. We denote by 
$B$ an open ball in $X$ and by $\lambda B$ the ball with 
the same center as $B$ and radius $\lambda$ times the 
radius of $B$.  Recall that the measure $\mu$ is said to be 
{\em doubling} if there is a constant $C\geq 1$ such that 
$\mu(2B)\leq  C\mu(B)$ for every ball $B$ in X. 

Given a compact interval $I\subset \R$, a {\em curve} $\gamma:I\to X$ is a continuous mapping. We only
consider curves that are non-constant and rectifiable. 
A curve 
$\gamma$, connecting two points $x,y\in X$, is {\em $C$-quasiconvex} if its length is at most $C\, d(x,y)$.

\subsection{$p$-Modulus and $\text{AM}$-modulus of a family of curves}

\begin{defn}\em\label{defmodinf}
Given a family $\Gamma$ of curves in $X$,  set $\mathcal{A}(\Gamma)$ 
to be the family of all
Borel measurable functions $\rho:X\rightarrow[0,\infty]$ such that
\[
\int_{\gamma}\rho \,  ds \geq 1\,\,\text{  for all  }\,\gamma\in\Gamma, 
\]
and set $\mathcal{A}_{seq}(\Gamma)$ to be the family of all
sequences $(\rho_i)$ of non-negative Borel measurable functions $\rho_i$ on $X$ such that
\[
\liminf_{i\to\infty}\int_{\gamma}\rho_i \,  ds \geq 1\,\,\text{  for all  }\,\gamma\in\Gamma.
\]
The integral $\int_\gamma\rho\, ds$ denotes the path-integral of $\gamma$ against the
arc-length re-parametrization of $\gamma$, see for example the description in~\cite{He}.
We define the $\infty$-\em modulus of $\Gamma$ \em by
\[
\Mod_{\infty}(\Gamma) = \inf_{\rho\in \mathcal{A}(\Gamma)}\, \|\rho\|_{L^{\infty}(X)},
\]
and for $1\le p<\infty$ the $p$-modulus of $\Gamma$ is
\[
 \Mod_p(\Gamma)=\inf_{\rho\in \mathcal{A}(\Gamma)}\, \int_X\rho^p\, d\mu.
 \]
Following~\cite{M1,M2}, we define 
 the \em approximate modulus \em ($\AM$-modulus) of $\Gamma$ by
\[
 \AM(\Gamma)=\inf_{(\rho_i)\in \mathcal{A}_{seq}(\Gamma)}\, \left\{\liminf_{i\to\infty}\int_X\rho_i\, d\mu\right\}.
 \]
The notion of $\AM_p(\Gamma)$ is defined analogously, with $\int_X\rho_i\, d\mu$ replaced by $\int_X\rho_i^p\, d\mu$.
If a property holds for all except for a family
$\Gamma$ of curves with $\Mod_p{\Gamma}=0$ (respectively with 
$\AM(\Gamma)=0$),
then we say that the property holds for \em $p$-a.e.~curve \em (respectively 
for \em $\AM$-a.e.~curve\em). 
\end{defn}

Note that $\AM(\Gamma)\le \Mod_1(\Gamma)$. Thanks to Mazur's lemma, it is a trivial consequence of the reflexivity of $L^{p}(X)$  that   
$\AM_p(\Gamma)=\Mod_p(\Gamma)$ when $1<p<\infty$, 
see \cite[Theorem 1]{HMM}. 
It is also easy to see that for any family of 
curves $\Gamma$ we  have $\AM_{\infty}(\Gamma)=
\Mod_{\infty}(\Gamma)$. Indeed, let $\tau=\AM_{\infty}
(\Gamma)$. If $\tau=\infty$ there is nothing to prove, so let us 
assume that $\tau<\infty$. By definition, there is a sequence of non-negative 
Borel functions $(g_i^{\veps})\in\mathcal{A}_{seq}(\Gamma)$ such that 
\[
\liminf_{i\to\infty}\|g_i^{\veps}\|_{L^{\infty}(X)}<\tau+\veps \quad
\text{and}\quad \liminf_{i\to\infty}\int_{\gamma}g_i^{\veps} \,  ds\geq 
1\text{ for each }\gamma\in\Gamma.
\]
Let $\rho_{\veps}:=\sup_i g_i^{\veps}$.  As $\rho_\veps\ge g_i^\veps$
for each $i\in\N$, it follows that 
\[
1\leq \liminf_{i\to\infty}\int_{\gamma}
g_i^{\veps} \,  ds\le  \int_{\gamma}\rho_{\veps} \,  ds,
\]
 and so 
$\Mod_{\infty}(\Gamma)\leq \|\rho_{\veps}\|_{L^{\infty}(X)}\leq \tau+\veps$ and 
the result follows.

Note that if every curve in $\Gamma$ is contained in a fixed ball $B$, then 
\[
\AM(\Gamma)\leq\Mod_1(\Gamma)\leq\, \mu(B)^{1-1/p}\Mod_p(\Gamma)^{1/p}\leq \mu(B)\, \Mod_\infty(\Gamma),
\]
and therefore
\[
  \limsup_{p\to\infty}\left[\Mod_p(\Gamma)\right]^{1/p}\leq\Mod_\infty(\Gamma).
\]


The next example shows that it is possible to have $\Mod_1(\Gamma)=\infty$ but 
$\AM(\Gamma)=1$. Further examples can be found in \cite[Section 9]{HMM}. 
The examples found there are families of curves that tangentially approach a smooth co-dimension one 
sub-manifold of $\R^n$.

\begin{example}\label{ex:Ex1}\em
Let $\Gamma$ be the collection of all rectifiable curves of length at most $1$ in the plane, 
and start from the $x$-axis 
with $0\le x\le 1$ and are parallel
to the $y$-axis. Then there is no acceptable $\rho\in L^1(X)$ for computing $\Mod_1(\Gamma)$, and hence 
$\Mod_1(\Gamma)=\infty$.
On the other hand, $\AM(\Gamma)$ is finite but positive. To see this, for each positive integer let 
$\rho_n=n\, \chi_{[0,1]\times[0,1/n]}$. Then $\int_\gamma\rho_n\, ds\ge 1$ whenever $\gamma$ is in $\Gamma$ with length at least
$1/n$, and as every curve in $\Gamma$ has positive length, we have that 
\[
\lim_{n\to\infty}\int_\gamma\rho_n\, ds\ge 1.
\]
So the sequence $(\rho_n)$ is admissible for $\Gamma$, and thus
\[
\AM(\Gamma)\le \limsup_{n\to\infty}\int_{\R^2}\rho_n\, d\mathcal{L}^2=\limsup_{n\to\infty}n\, \left( \frac{1}{n}\times 1\right)=1.
\]
To see that $\AM(\Gamma)>0$, we consider the sub-family $\Gamma_{1/2}$ of all line segments in $\Gamma$ with length $1/2$,
and let $(\rho_i)\in \mathcal{A}_{seq}(\Gamma_{1/2})$. Then by Fubini's theorem, for each $i\in\N$ we have
\[
\int_{\R^2}\rho_i\, d\mathcal{L}^2\ge \int_0^1\int_0^{1/2}\rho_i(x,y)\, dy\, dx
= \int_0^1\left(\int_0^{1/2}\rho_i(x,y)\, dy\right)\, dx. 
\]
Now by Fatou's lemma, 
\[
\liminf_{i\to\infty}\int_{\R^2}\rho_i\, d\mathcal{L}^2\ge\int_0^1\left(\liminf_{i\to\infty}\int_0^{1/2}\rho_i(x,y)\, dy\right)\, dx\ge 1.
\]
It follows that 
\[
\AM(\Gamma)\ge \AM(\Gamma_{1/2})\ge 1. 
\]
\end{example}



\subsection{BV functions based on the notion of $\AM$-modulus.}

\begin{defn}\em
A nonnegative Borel function $g$ on $X$ is a \em $1$-weak upper
gradient \em of an extended real-valued function $u$ on X if for $1$-a.e. curve $\gamma : [a,b] \to X$,
$$
|u(\gamma(a))-u(\gamma(b))|\leq\int_{\gamma}g\, ds.
$$
\end{defn}

Given a function $u$ that has a $1$-weak upper gradient in $L^1(X)$, there is a \emph{minimal}
$1$-weak upper gradient of $u$, denoted $g_u$, in the sense that whenever $g$ is a $1$-weak upper 
gradient of $u$, we have $g_u\le g$ almost everywhere in $X$.

The following notion of BV functions on $X$ is due to Miranda Jr.~\cite{Mi}.

\begin{defn}\em {\rm {\bf (BV functions)}}
For $u\in L^1_{\text{loc}}(X)$, we define the total variation of $u$ as
$$
\|Du\|(X):=\inf\left\{\liminf_{i\to\infty}\inf_{g_{u_i}}\int_X g_{u_i} d\mu:u_i\in\LIP_{\text{loc}}(X),u_i\to u \,\text{in}\,L^1_{\text{loc}}(X)\right\},
$$
where the second infimum is over all $1$-weak upper gradients $g_{u_i}$ of $u_i$. 
We say that a function $u\in L^1_{\text{loc}}(X)$ is of 
{\em bounded variation}, $u\in \BV(X)$ if $\|Du\|(X)<\infty$. A measurable set $E\subset X$ is said of 
{\em finite perimeter} if  $\|D\chi_E\|(X)<\infty$.
\end{defn}

 The following definition of $\BV_{\AM}$ class is from~\cite{M1}. 

\begin{defn} {\rm {\bf (BV-AM functions)}}\label{def:BV-AM}
\em  A function $u\in L^1(X)$ is in the $\BV_{\AM}(X)$ class if there is 
a family $\Gamma$ of rectifiable curves in $X$ with $\AM(\Gamma)=0$, and
a sequence $(g_i)$
of non-negative Borel measurable functions in $L^1(X)$ such that whenever $\gamma:[a,b]\to X$ is a
non-constant compact rectifiable curve that does not belong to $\Gamma$, 
we have that
\begin{equation}\label{eq:AM-uppr}
|u(\gamma(t))-u(\gamma(s))|\le \liminf_{i\to\infty}\int_{\gamma\vert_{[s,t]}} g_i\, ds
\end{equation}
for $\mathcal{H}^1$-a.e.~$s,t\in[a,b]$ with $s<t$,
and 
\[
\liminf_{i\to\infty}\int_X g_i\, d\mu<\infty.
\]
Such a sequence $(g_i)$ is said to be a \em $\BV_{\AM}$-upper bound \em of $u$.
We set
\[
\Vert D_{\AM}u\Vert(X):=\inf_{(g_i)}\liminf_{i\to\infty}\int_X g_i\, d\mu,
\]
where the infimum is over all $\BV_{\AM}$-upper bounds of $u$.
\end{defn}

Notice that by \cite[Theorem 4.1]{M2}, $\BV(X)\subseteq\BV_{\AM}(X)$. This also follows from the next lemma.
The following lemma holds even if $\mu$ is not doubling or does not support a $1$-Poincar\'e inequality.

\begin{lem}\label{lem:BV2BVN}
Assume that $u \in \BV(X)$. Then
there is a set $N\subset X$ with $\mu(N)=0$ and
a sequence $(g_i)$
of non-negative Borel measurable functions in $L^1(X)$ such that whenever $\gamma$ is a
non-constant compact rectifiable curve 
with end-points $x,y\in X\setminus N$,
\begin{equation*} 
|u(y)-u(x)|\le \liminf_{i\to\infty}\int_\gamma g_i\, ds
\end{equation*}
{\em(}that is,~\eqref{eq:AM-uppr} holds{\em)} and 
\[
\liminf_{i\to\infty}\int_X g_i\, d\mu<\infty.
\]
\end{lem}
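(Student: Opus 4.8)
The plan is to start from the definition of $\BV(X)$: there is a sequence $u_i\in\LIP_{\mathrm{loc}}(X)$ with $u_i\to u$ in $L^1_{\mathrm{loc}}(X)$ and $\liminf_i\int_X g_{u_i}\,d\mu=\|Du\|(X)<\infty$. Passing to a subsequence, I may assume the $\liminf$ is a genuine limit and that $\sup_i\int_X g_{u_i}\,d\mu<\infty$. The functions $g_i:=g_{u_i}$ will be the candidate sequence; the one remaining issue is to replace the pointwise inequality $|u(y)-u(x)|\le\liminf_i\int_\gamma g_i\,ds$ (which we do not yet have for $u$, only for each $u_i$) by an honest inequality valid off a null set of endpoints. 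For each fixed $i$, since $g_{u_i}$ is a $1$-weak upper gradient of the locally Lipschitz function $u_i$, the inequality $|u_i(\gamma(b))-u_i(\gamma(a))|\le\int_\gamma g_{u_i}\,ds$ holds for $1$-a.e.\ curve $\gamma$, hence for every rectifiable $\gamma$ after a modification of $u_i$ on a set of measure zero; but $u_i$ is continuous, so in fact the inequality holds for \emph{all} rectifiable $\gamma$ (continuity of $u_i$ upgrades the a.e.-curve statement to all curves, by the standard Fuglede-type argument together with quasi-continuity).

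Next I would extract the null exceptional set $N$. Since $u\in L^1_{\mathrm{loc}}(X)$ and $u_i\to u$ in $L^1_{\mathrm{loc}}$, after passing to a further subsequence we get $u_i\to u$ $\mu$-a.e.; let $N_0$ be the set where this convergence fails, so $\mu(N_0)=0$. I would then take $N:=N_0$ (possibly enlarging it by another $\mu$-null set coming from the $L^1_{\mathrm{loc}}$-bound on the $g_i$, e.g.\ the set where $\sup_i g_i=\infty$ is \emph{not} needed, but the set of non-Lebesgue points of $u$ might be convenient — in a doubling space this is null, but we are told not to assume doubling, so I will avoid it and stick with $N=N_0$). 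Now fix any non-constant rectifiable $\gamma$ with endpoints $x,y\in X\setminus N$. For each $i$,
\[
|u_i(y)-u_i(x)|\le\int_\gamma g_i\,ds,
\]
and since $x,y\notin N$ we have $u_i(x)\to u(x)$ and $u_i(y)\to u(y)$. Taking $\liminf_{i\to\infty}$ on both sides and using that the left-hand side converges to $|u(y)-u(x)|$ gives exactly
\[
|u(y)-u(x)|=\lim_{i\to\infty}|u_i(y)-u_i(x)|\le\liminf_{i\to\infty}\int_\gamma g_i\,ds,
\]
which is \eqref{eq:AM-uppr} (with $\gamma$ in place of $\gamma|_{[s,t]}$, but the same argument applies to every subcurve with endpoints outside $N$, and $\mathcal H^1$-a.e.\ parameter lands outside $N$ by Fubini since $\mu(N)=0$ — this last point needs the curve to have positive length, which holds by hypothesis). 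The energy bound $\liminf_i\int_X g_i\,d\mu=\|Du\|(X)<\infty$ is immediate from the choice of $(u_i)$.

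The one genuine technical point — and the place where I expect to have to be careful — is justifying that the inequality $|u_i(y)-u_i(x)|\le\int_\gamma g_{u_i}\,ds$ holds for \emph{all} rectifiable curves and not merely $1$-a.e.\ curve: a priori $g_{u_i}$ being a $1$-weak upper gradient only controls $u_i$ along $1$-a.e.\ curve. Since we are not assuming a Poincar\'e inequality here, I cannot invoke the usual "Lipschitz functions have genuine upper gradients" machinery in full strength; instead the clean route is to observe that each $u_i$ is \emph{locally Lipschitz}, hence absolutely continuous on every rectifiable curve, and for such a function the local Lipschitz constant function $\operatorname{Lip} u_i$ is an upper gradient in the non-weak sense, so $|u_i(y)-u_i(x)|\le\int_\gamma\operatorname{Lip}u_i\,ds$ for \emph{every} $\gamma$; then $g_{u_i}\le\operatorname{Lip}u_i$ $\mu$-a.e.\ only gives the reverse of what we want, so in fact the cleanest fix is to simply use $g_i:=\operatorname{Lip}u_i$ rather than the minimal weak upper gradient, at the (harmless) cost that $\liminf_i\int_X\operatorname{Lip}u_i\,d\mu$ is still finite because it is bounded by a constant times $\|Du\|(X)$ — or, more simply, we may choose the defining sequence $(u_i)$ in the $\BV$ definition together with weak upper gradients $g_{u_i}$ for which the energy is within $1/i$ of the minimal energy and which are themselves genuine upper gradients off a curve family of zero $1$-modulus; enlarging $N$ is not allowed to absorb a curve family, so the $\operatorname{Lip}u_i$ choice is the safe one. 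With that replacement the argument above goes through verbatim.
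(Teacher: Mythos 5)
Your overall skeleton is the same as the paper's: take a defining sequence $u_i\in\LIP_{\mathrm{loc}}(X)$ with bounded upper-gradient energy, pass to a subsequence converging pointwise $\mu$-a.e., let $N$ be the divergence set, and conclude by taking limits along curves with endpoints off $N$. You have also correctly isolated the one real technical point, which the paper's proof passes over silently by simply writing ``for a choice of upper gradients $g_i$ of $u_i$'': the definition of $\BV(X)$ only hands you $1$-\emph{weak} upper gradients, which control $u_i$ along $1$-a.e.\ curve, not along every curve.

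However, both of your proposed resolutions of that point fail. First, continuity of $u_i$ does \emph{not} upgrade a weak upper gradient to a genuine one: e.g.\ in $\R^2$ the function $g=\chi_{\R^2\setminus\ell}$, with $\ell$ a line, is a $1$-weak upper gradient of the smooth function $u(x,y)=x$ (the curves spending positive length on $\ell$ form a family of $1$-modulus zero) but is not an upper gradient along segments of $\ell$. Second, your ``safe'' fallback $g_i:=\operatorname{Lip}u_i$ destroys the energy bound: the inequality $\liminf_i\int_X\operatorname{Lip}u_i\,d\mu\le C\,\Vert Du\Vert(X)$ is exactly the kind of statement that \emph{requires} a doubling measure and a Poincar\'e inequality (in a space with few curves the minimal weak upper gradient of a Lipschitz function can vanish identically while $\operatorname{Lip}u_i$ does not), and the lemma is explicitly asserted to hold without those hypotheses. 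The correct fix is the one you raise and then discard for the wrong reason: you do not need to absorb the exceptional curve family into $N$; since that family $\Gamma_i$ has $\Mod_1(\Gamma_i)=0$, there is a non-negative Borel $\rho_i\in L^1(X)$ with $\Vert\rho_i\Vert_{L^1(X)}\le 2^{-i}$ and $\int_\gamma\rho_i\,ds=\infty$ for every $\gamma\in\Gamma_i$, so that $g_i:=g_{u_i}+\rho_i$ is a \emph{genuine} upper gradient of $u_i$ with essentially the same energy. (This is the same device the paper uses, in its $\AM$ incarnation, in the proof of Lemma~\ref{lem:AMweak-AMupper}.) With that substitution your argument closes. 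A minor further slip: the claim that $\mathcal{H}^1$-a.e.\ parameter of a given curve lands outside $N$ ``by Fubini since $\mu(N)=0$'' is false (a curve may live entirely inside a $\mu$-null set), but it is also not needed, since the conclusion of the lemma only concerns curves whose \emph{endpoints} lie in $X\setminus N$.
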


Note that the lemma gives a stronger control of $u$ than allowed by the $\BV_{\AM}$-control. For
functions in $\BV_{\AM}(X)$, we know that given a path $\gamma$ there is a set $N_\gamma$ with 
$\mathcal{H}^1(\gamma^{-1}(N_\gamma))=0$ so that whenever $x,y$ lie in the trajectory of $\gamma$ with
$x,y\not\in N_\gamma$, inequality~\eqref{eq:AM-uppr} holds. Here we show that we can choose $N_\gamma$ to
be independent of $\gamma$ and in addition with 
$\mu$-measure zero.

\begin{proof} 
%
Given $u\in\BV(X)$ there is a sequence
$u_i \in\LIP_{\text{loc}}(X)$
such that $u_i\to u$ in $L^1(X)$ and $\lim_{i\to\infty}\int_Xg_{i}\, d\mu\le M<\infty$ for a choice
of upper gradients $g_i$ of $u_i$. By passing to a 
subsequence if necessary, we may also assume that $u_i\to u$ pointwise $\mu$-a.e.~in $X$. Let $N$ be the set of all
points $x\in X$ for which $\lim_{i\to\infty}u_i(x)\ne u(x)$. Then $\mu(N)=0$. Let $\gamma$ be a non-constant compact
rectifiable curve in $X$ with end points $x,y\in X\setminus N$. Then
\[
|u(x)-u(y)|=\lim_{i\to\infty}|u_i(x)-u_i(y)|\le \liminf_{i\to\infty}\int_\gamma g_i\, ds.
\]
\end{proof}

The main focus of this paper is to show that $\BV_{\AM}(X)=\BV(X)$ when the measure on $X$ is doubling
and supports a $1$-Poincar\'e inequality. 

%

\subsection{The spaces $N^{1,1}(X)$ and $N^{1,1}_{\AM}(X)$}

Let $\widetilde{N}^{1,1}(X,d,\mu)$, where $1\leq p<\infty$, be the
class of all $L^1$-integrable Borel functions on $X$ for which there
exists a $1$-weak upper gradient in $L^1(X)$. For
$u\in\widetilde{N}^{1,1}(X,d,\mu)$ we define
$$
\|u\|_{\widetilde{N}^{1,1}(X)}=\|u\|_{L^1(X)}+\inf_{g}\|g\|_{L^1(X)},
$$
where the infimum is taken over all $1$-weak upper gradients $g$ of
$u$. As usual, we can now define $N^{1,1}(X,d,\mu)$ equipped with the norm
$\|u\|_{N^{1,1}(X)}=\|u\|_{\widetilde{N}^{1,1}(X)}.$

Once we have the new concept of $\AM$-a.e.~curve, it is natural to define an upper gradient 
and a Sobolev class related to this notion.

\begin{defn}[Weak AM-upper gradient]\em
A nonnegative Borel function $g$ on $X$ is a \em weak $\AM$-upper gradient \em of $u$ on X if
$|u(\gamma(a))-u(\gamma(b))|\leq\int_{\gamma}g\, ds$
for $\AM$-a.e. curve $\gamma:[a,b]\to X$.
\end{defn}

\begin{defn}\em {\rm {\bf ($N^{1,1}_{\AM}$ functions)}}
Let $\widetilde{N}^{1,1}_{\AM}(X,d,\mu)$, be the
class of all Borel functions in $L^1(X)$ for which there
exists a weak $\AM$-upper gradient in $L^1(X)$. 
For
$u\in\widetilde{N}^{1,1}_{\AM}(X)$ we define
$$
\|u\|_{\widetilde{N}^{1,1}_{\AM}(X)}=\|u\|_{L^1(X)}+\inf_{g}\|g\|_{L^1(X)},
$$
where the infimum is taken over all weak $\AM$-upper gradient $g$ of
$u$. We can now define $N^{1,1}_{\AM}(X)$ 
to be the class $\widetilde{N}^{1,1}_{\AM}(X,d,\mu)$,
equipped with the norm
$\|u\|_{N^{1,1}_{\AM}(X)}=\|u\|_{\widetilde{N}^{1,1}_{\AM}(X)}$.
\end{defn}



The following lemma proves that the first definition implies the second one. In some sense, the first 
definition is related to the Sobolev class $N^{1,1}$ while the second is related to the $\BV$ class.

\begin{lem}\label{lem:AMweak-AMupper}
If a function $u$ on X has  $g$ as a weak $\AM$-upper gradient, then there exists a $\BV_{\AM}$-upper bound of $u$.
\end{lem}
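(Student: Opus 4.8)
The plan is to show that a weak $\AM$-upper gradient $g\in L^1(X)$ of $u$ can itself be turned into a $\BV_{\AM}$-upper bound of $u$ by taking the constant sequence $g_i:=g$ for all $i$. The point is to compare the two exceptional-curve notions: in the definition of weak $\AM$-upper gradient the inequality $|u(\gamma(a))-u(\gamma(b))|\le\int_\gamma g\,ds$ holds for $\AM$-a.e.\ curve, i.e.\ outside a family $\Gamma_0$ with $\AM(\Gamma_0)=0$; in the definition of $\BV_{\AM}$-upper bound we must produce a family $\Gamma$ with $\AM(\Gamma)=0$ and a sequence $(g_i)$ so that for $\gamma\notin\Gamma$, the estimate \eqref{eq:AM-uppr} holds for $\mathcal H^1$-a.e.\ pair $s<t$ in the parameter interval. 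Taking $g_i\equiv g$, the liminf in \eqref{eq:AM-uppr} is just $\int_{\gamma|_{[s,t]}}g\,ds$, and $\liminf_i\int_X g_i\,d\mu=\int_X g\,d\mu<\infty$ since $g\in L^1(X)$, so the integrability requirement is immediate.

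First I would set $\Gamma_0=\{\gamma:\ |u(\gamma(a))-u(\gamma(b))|>\int_\gamma g\,ds\}$, which has $\AM(\Gamma_0)=0$. The remaining task is to choose the exceptional family $\Gamma$ for the $\BV_{\AM}$ definition. The natural candidate is $\Gamma:=\Gamma_0$ together with the family of all curves having a subcurve in $\Gamma_0$ — but one must be careful: I need that for $\gamma\notin\Gamma$, \emph{every} subcurve $\gamma|_{[s,t]}$ (or at least $\mathcal H^1$-a.e.\ pair of endpoints) avoids $\Gamma_0$. So the cleaner choice is to let $\Gamma$ be the family of all curves $\gamma$ that possess \emph{some} subcurve lying in $\Gamma_0$. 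Then if $\gamma\notin\Gamma$, none of its subcurves is in $\Gamma_0$, hence $|u(\gamma(t))-u(\gamma(s))|\le\int_{\gamma|_{[s,t]}}g\,ds$ for all $s<t$ — even better than the $\mathcal H^1$-a.e.\ requirement. What remains is to check $\AM(\Gamma)=0$.

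The key step, and the one I expect to be the main obstacle, is verifying that enlarging $\Gamma_0$ to the family $\Gamma$ of all curves having a subcurve in $\Gamma_0$ does not increase the $\AM$-modulus from zero. For the ordinary $1$-modulus this is standard (a test function admissible for $\Gamma_0$, suitably handled, is admissible for $\Gamma$, since $\int_{\gamma}\rho\,ds\ge\int_{\gamma|_{[s,t]}}\rho\,ds$). For the $\AM$-modulus the same monotonicity works at the level of admissible \emph{sequences}: if $(\rho_i)\in\mathcal A_{seq}(\Gamma_0)$ with $\liminf_i\int_X\rho_i\,d\mu$ small, then for any $\gamma\in\Gamma$ with subcurve $\gamma|_{[s,t]}\in\Gamma_0$ we have $\liminf_i\int_\gamma\rho_i\,ds\ge\liminf_i\int_{\gamma|_{[s,t]}}\rho_i\,ds\ge1$, so $(\rho_i)\in\mathcal A_{seq}(\Gamma)$ and hence $\AM(\Gamma)\le\AM(\Gamma_0)=0$. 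One should also record the trivial subadditivity-type fact that a curve in $\Gamma_0$ is itself (a subcurve of itself, hence) in $\Gamma$, so $\Gamma_0\subseteq\Gamma$. Assembling these pieces: with $\Gamma$ as above and $g_i\equiv g$, all three clauses of Definition~\ref{def:BV-AM} are satisfied, so $(g_i)$ is a $\BV_{\AM}$-upper bound of $u$, completing the proof. A minor technical remark worth including is that $u$ need only be assumed $L^1$ and Borel (as built into the definition of $\widetilde N^{1,1}_{\AM}(X)$) for the statement to make sense, and no doubling or Poincaré hypothesis is used anywhere.
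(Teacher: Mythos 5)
Your proof is correct, but it takes a genuinely different route from the paper's. You keep the constant sequence $g_i\equiv g$ and instead enlarge the exceptional family: you replace $\Gamma_0=\{\gamma:\ |u(\gamma(a))-u(\gamma(b))|>\int_\gamma g\,ds\}$ by the ``overflowed'' family $\Gamma$ of curves possessing a subcurve in $\Gamma_0$, and you verify directly that $\mathcal A_{seq}(\Gamma_0)\subseteq\mathcal A_{seq}(\Gamma)$, whence $\AM(\Gamma)\le\AM(\Gamma_0)=0$; this monotonicity-under-overflowing argument for $\AM$ is elementary and sound. The paper goes the other way: it invokes \cite[Theorem~7]{HMM} to produce a sequence $(\widetilde g_i)$ with $\liminf_i\|\widetilde g_i\|_{L^1}<\infty$ and $\liminf_i\int_\gamma\widetilde g_i\,ds=\infty$ for every $\gamma\in\Gamma_0$, and then absorbs the exceptional family into the candidate sequence by setting $g_i=g+\eps\widetilde g_i$. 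The trade-off is this: your argument is more self-contained (no appeal to \cite{HMM}) and proves exactly the statement of Lemma~\ref{lem:AMweak-AMupper}, while the paper's construction yields the strictly stronger conclusion that \eqref{eq:AM-uppr} holds for \emph{every} curve and \emph{every} pair $s<t$, with an empty exceptional family --- a strengthening that the paper records immediately after the proof and actually relies on later, in the proof of Proposition~\ref{prop:N1p-AM-PI}, where the auxiliary sequence $(\rho_i)=(\widetilde g_i)$ itself (not just the conclusion of the lemma) is reused. Your observation that $g$ must implicitly lie in $L^1(X)$ for the integrability clause of Definition~\ref{def:BV-AM} is apt; the paper's proof makes the same tacit assumption, consistent with the intended application to $N^{1,1}_{\AM}(X)$.
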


\begin{proof} 
Assume that 
\begin{equation}\label{eq:BV-upp}
|u(\gamma(a))-u(\gamma(b))|\leq\int_{\gamma}g \, ds
\end{equation}
for $\AM$-a.e. curve $\gamma:[a,b]\to X$. Let $\Gamma$ be the 
collection of curves for which~\eqref{eq:BV-upp} does not hold. By 
definition $\AM(\Gamma)=0$ and so by ~\cite[Theorem~7]
{HMM} there is a sequence of non-negative Borel functions 
$\widetilde{g}_i$ such that 
\[
\liminf_{i\to\infty}\|\widetilde{g}_i\|_{L^1}<\infty
\quad\text{and }\quad  \liminf_{i\to\infty}\int_{\gamma}\widetilde{g}_i \,  ds=\infty\,\, \text{ for all }\gamma\in\Gamma.
\]
Let $\Gamma_0$ be the collection of all non-constant compact rectifiable curves $\gamma$ in $X$ for which
\[
 \liminf_{i\to\infty}\int_{\gamma}\widetilde{g}_i \,  ds=\infty;
\]
then $\AM(\Gamma_0)=0$. Observe that if $\gamma$ is a non-constant compact rectifiable curve in $X$
such that $\gamma\not\in\Gamma_0$, then every sub-curve of $\gamma$ also does not belong to $\Gamma_0$.
Now, for each $\eps>0$ the sequence of functions $g_i=g+\eps\widetilde{g}_i$ has the property 
 that for $\gamma\notin\Gamma_0$,
 \[
 |u(\gamma(a))-u(\gamma(b))|\leq\int_\gamma g\, ds\leq\liminf_{i\to\infty}\int_{\gamma}(g+\eps\widetilde{g}_i)\, ds,
\]
and for $\gamma\in\Gamma_0$,
\[
 |u(\gamma(a))-u(\gamma(b))|\leq\infty=\int_{\gamma}(g+\eps\widetilde{g}_i)\, ds,
\]
so $|u(\gamma(a))-u(\gamma(b))|\leq\liminf_{i\to\infty}\int_{\gamma}g_i\, ds
$ holds for every curve $\gamma$.
\end{proof}

Note that we have more than just that the sequence $(g_i)$ forms  a $\BV_{\AM}$-upper bound 
of $u$; the inequality holds for \emph{every} subcurve of $\gamma$, not merely for $\mathcal{H}^1$-almost every
pair of points in the domain of $\gamma$.

From the above we know that for $1<p<\infty$, $N^{1,1}(X)\subseteq N^{1,1}_{\AM}(X)\subsetneq\BV_{\AM}(X)$ and
$$
\LIP^{\infty}(X)\subseteq N^{1,\infty}(X) \subseteq N^{1,p}(X)\subseteq N^{1,1}(X)\subsetneq\BV(X)
 \subseteq \BV_{\AM}(X).
$$
In Section~\ref{sec:PI-BV} we will show that if $X$ supports a $1$-Poincar\'e inequality then 
$\BV_{\AM}(X)=\BV(X)$ and that $N^{1,1}_{\AM}(X)=N^{1,1}(X)$.



\begin{remark}\label{rem:truncation} \em
For $u\in \BV_{\AM}(X)$ and a sequence $(g_i)$ such that
$\lim_{i\to\infty}\int_X g_i\, d\mu<\infty$, the sequence of measures $(g_i\, d\mu)$ is a bounded
sequence. We can assume (by localizing the argument if need be) that $X$ is compact as well. Then there is
a subsequence, also denoted $(g_i\, d\mu)$,
and a Radon measure $\nu$ on $X$ such that the sequence of measures
$(g_i\, d\mu)$ converges weakly* to $d\nu$ in $X$. As $X$ is compact, we see that
$\Vert D_{\AM}u\Vert(X)\le \nu(X)$. 
\end{remark}

\section{Equivalence of BV and AM-BV classes under Poincar\'e inequality}\label{sec:PI-BV}

The aim of this section is to show the 
equivalence of the functional spaces $\BV(X)$ and $
\BV_{\AM}(X)$, under the additional hypothesis that the 
metric space supports a doubling measure and a $1$-Poincar\'e inequality. 

\begin{defn}\em
The metric measure space $X$ supports a {\em $1$-Poincar\'e inequality} if there are positive constants $C,\lambda$
such that whenever $B$ is a ball in $X$ and $g$ is 
an upper gradient of $u$,
\[
 \jint_B|u-u_B|\, d\mu\le C\, \text{rad}(B)\, \jint_{\lambda B}g\, d\mu.
\]
Here $u_B:=\mu(B)^{-1}\int_B u\, d\mu=\jintb_B u\, d\mu$ is 
the average of $u$ on the ball $B$.
\end{defn}

With the notion of $\BV_{\AM}$ class, one could even define a stronger version of $1$-Poincar\'e inequality. 

\begin{defn}\em
We say that $X$ supports an \em $\AM$-Poincar\'e 
inequality \em if there exist constants
$C>0$, $\lambda\ge 1$
such that for each measurable function $u$ on $X$, each $\BV$-upper bound $(g_i)$ of $u$, 
and each ball $B\subset X$, we have
\[
 \jint_B|u-u_B|\, d\mu\le C\, \text{rad}(B)\, \liminf_{i\to\infty}\jint_{\lambda B} g_i\, d\mu.
\]
\end{defn}

This should imply that 
\[
 \jint_B|u-u_B|\, d\mu\le C\, \text{rad}(B)\, \frac{\Vert D_{\AM}u\Vert(\lambda B)}{\mu(\lambda B)}.
\]

On the other hand, notice that $1$-Poincar\'e inequality  implies 
\[
 \jint_B|u-u_B|\, d\mu\le C\, \text{rad}(B)\frac{\|Du\|(\tau B)}{\mu(\tau B)}.
 \]

 As a first step, in the following proposition we prove the equivalence of $\BV(X)$ and 
$\BV_{\AM}(X)$ under the hypotheses that the measure is doubling and the space supports an 
$\AM$-Poincar\'e 
inequality. We will see in Theorem~\ref{thm:main1} that the support of an $\AM$-Poincar\'e 
inequality is equivalent to the support of a $1$-Poincar\'e  inequality. 

\begin{prop}\label{prop:BV-AM-PI}
If $X$ supports a $\AM$-Poincar\'e 
inequality and $\mu$ is doubling, then the two classes $\BV_{\AM}(X)$ and
$\BV(X)$ are equal, with comparable norms. 
\end{prop}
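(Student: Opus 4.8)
The plan is to prove the two inclusions separately. The inclusion $\BV(X)\subseteq\BV_{\AM}(X)$ with the norm bound $\Vert D_{\AM}u\Vert(X)\le\Vert Du\Vert(X)$ is already essentially available: by Lemma~\ref{lem:BV2BVN}, given $u\in\BV(X)$ there is a sequence $(g_i)$ of non-negative Borel functions with $\liminf_i\int_X g_i\,d\mu\le\Vert Du\Vert(X)$ (after choosing the $u_i$ and $g_i$ nearly optimally) satisfying~\eqref{eq:AM-uppr} off a $\mu$-null set of endpoints, which in particular makes $(g_i)$ a $\BV_{\AM}$-upper bound of $u$; this needs no Poincar\'e inequality and no doubling. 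So the real content is the reverse inclusion $\BV_{\AM}(X)\subseteq\BV(X)$ together with $\Vert Du\Vert(X)\le C\,\Vert D_{\AM}u\Vert(X)$, and this is where the $\AM$-Poincar\'e inequality and doubling enter.

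For the reverse inclusion, fix $u\in\BV_{\AM}(X)$ and let $(g_i)$ be a $\BV_{\AM}$-upper bound with $\liminf_i\int_X g_i\,d\mu$ close to $\Vert D_{\AM}u\Vert(X)$; after passing to a subsequence assume the $\liminf$ is a limit and, using Remark~\ref{rem:truncation}, that $g_i\,d\mu$ converges weakly* to a finite Radon measure $\nu$ with $\nu(X)\le\Vert D_{\AM}u\Vert(X)$ (localizing if $X$ is not compact). The strategy is the standard discrete-convolution / Lipschitz-approximation scheme used to prove that spaces supporting a Poincar\'e inequality have a good BV theory: for each scale $t>0$ take a maximal $t$-separated set, the associated Lipschitz partition of unity $\{\varphi_j\}$ subordinate to balls $B_j=B(x_j,t)$ of bounded overlap, and define the discrete convolution $u_t:=\sum_j u_{B_j}\,\varphi_j$. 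Then $u_t$ is locally Lipschitz, $u_t\to u$ in $L^1_{\text{loc}}(X)$ as $t\to0$ (by Lebesgue differentiation, valid under doubling), and one estimates a pointwise upper gradient of $u_t$ on each ball by the standard telescoping argument:
\[
g_{u_t}(x)\le \frac{C}{t}\sum_{j:\,x\in B_j}|u_{B_j}-u_{B_k}|\le \frac{C}{t}\,\mu(C'B_k)^{-1}\int_{C'B_k}|u-u_{C'B_k}|\,d\mu
\]
on $B_k$, where $C'$ depends only on the doubling and overlap constants. Applying the $\AM$-Poincar\'e inequality to the ball $C'B_k$ with the upper bound $(g_i)$ gives
\[
g_{u_t}(x)\le C\,\liminf_{i\to\infty}\frac{1}{\mu(\lambda C'B_k)}\int_{\lambda C'B_k}g_i\,d\mu\qquad\text{on }B_k.
\]
Integrating over $X$, using bounded overlap of the dilated balls $\{\lambda C'B_k\}$, and passing to the weak* limit $\nu$ then yields $\int_X g_{u_t}\,d\mu\le C\,\nu(C''X)\le C\,\Vert D_{\AM}u\Vert(X)$ with $C$ independent of $t$. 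Letting $t\to0$ and using the definition of $\Vert Du\Vert(X)$ as an infimum over locally Lipschitz approximating sequences gives $u\in\BV(X)$ with $\Vert Du\Vert(X)\le C\,\Vert D_{\AM}u\Vert(X)$.

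The main obstacle I expect is the interchange of limits and the handling of the $\liminf_{i\to\infty}$ inside the pointwise upper-gradient bound for $u_t$: the $\AM$-Poincar\'e inequality produces, for each fixed ball $C'B_k$, a bound in terms of $\liminf_i\jintb_{\lambda C'B_k}g_i\,d\mu$, but to sum these over $k$ and pass to the weak* limit cleanly one wants to first fix $i$, run the whole discrete-convolution estimate with the single function $g_i$ in place of the $\liminf$, get $\int_X g_{u_t}\,d\mu\le C\int_X g_i\,d\mu$ for every $i$ (this is legitimate because $(g_i)$, being a $\BV_{\AM}$-upper bound, controls $u$ on $\mathcal{H}^1$-a.e.\ pair on $\AM$-a.e.\ curve, hence in particular the averages $u_{B_j}$ are controlled once one knows the relevant telescoping inequality holds for $\mu$-a.e.\ choice of base points — here one may need a Fubini-type argument over pairs of Lebesgue points, or simply invoke that the $\AM$-Poincar\'e inequality as stated is already phrased directly in terms of $u$ and the bound $(g_i)$, which sidesteps the issue), and only then take $\liminf_i$ and let $t\to0$. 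A secondary technical point is checking $u_t\to u$ in $L^1_{\text{loc}}$ and that $u_t\in\LIP_{\text{loc}}(X)$ globally (not merely locally Lipschitz on each ball), but these are routine consequences of doubling and the Lebesgue differentiation theorem, and the bounded-overlap bookkeeping for the dilated balls $\lambda C'B_k$ is standard.
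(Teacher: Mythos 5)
Your proposal is correct and follows essentially the same route as the paper: the easy inclusion via Lemma~\ref{lem:BV2BVN}, and for the converse a discrete convolution $u_\eps=\sum_j u_{B_j}\pip_j^\eps$ at scale $\eps$, the telescoping pointwise Lipschitz estimate, the $\AM$-Poincar\'e inequality applied ball by ball, and bounded overlap to sum up (the paper uses that $\Vert D_{\AM}u\Vert$ is a Radon measure where you use the weak* limit $\nu$, an immaterial difference). Note only that the $L^1$ convergence $u_\eps\to u$ is obtained in the paper quantitatively from the $\AM$-Poincar\'e inequality itself, $\int_X|u-u_\eps|\,d\mu\le C\eps\Vert D_{\AM}u\Vert(X)$, rather than from Lebesgue differentiation alone.
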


\begin{proof} 
 Note first that $\BV(X)\subset \BV_{\AM}(X)$, see Lemma~\ref{lem:BV2BVN}.
  
 Now 
 let us prove that if $u\in \BV_{\AM}(X)$, then $u\in \BV(X)$.  By the doubling property of $\mu$, for $\eps>0$
we can cover $X$ by balls $B_i=B(x_i,\eps)$ such that the balls $5\lambda B_i$ have bounded overlap. Let 
$\pip_i^\eps$ be a partition of unity subordinate to the cover $2B_i$. For $u\in \BV_{\AM}(X)$ let
\[
u_\eps=\sum_i u_{B_i}\pip_i^\eps.
\]
Recall that we have bounded overlap of the collection $5B_i$ with
$X=\bigcup_j B_j$,
$\mu$ is doubling, and that if $2B_i$ intersects $B_j$ then
$5B_j\supset 2B_i$.
Then we have for $x\in B_j\subset X$, 
\begin{align*}
|u(x)-u_\eps(x)|=\bigg\vert\sum_i [u_{B_i}-u(x)]\pip_i^\eps(x)\bigg\vert
   &\le \sum_i|u_{B_i}-u(x)|\pip_i^\eps(x)\\
   &\le \sum_{i\, :\, x\in 2B_i}|u_{B_i}-u(x)|\\
   &\le \sum_{i\, :\, 2B_i\cap B_j\ne\emptyset}|u_{B_i}-u(x)|\\
   &\le C\, C_D^3\, |u_{5B_j}-u(x)|.
\end{align*}
Therefore, by the AM-Poincar\'e inequality,
\begin{align*}
\int_X|u-u_\eps|\, d\mu \le \sum_j\int_{B_j}|u-u_\eps|\, d\mu
 &\le C\sum_j \int_{B_j}|u_{5B_j}-u|\, d\mu\\
 &\le C\sum_j\int_{5B_j}|u-u_{5B_j}|\, d\mu\\
 &\le C\eps \sum_j\Vert D_{\AM}u\Vert(5\lambda B_j).
\end{align*}
Since $\Vert D_{\AM}u\Vert$ is a Radon measure (\cite[Theorem 3.4]{M1}) and $5\lambda B_j$ have bounded
overlap, we have
\[
\int_X|u-u_\eps|\, d\mu\le C\eps\, \Vert D_{\AM}u\Vert(X)\to 0\text{ as }\eps\to 0^+.
\]
Thus $u_\eps\to u$ in $L^1(X)$, and we also know from the definition of $u_\eps$ that each $u_\eps$ is locally
Lipschitz and hence in $N^{1,1}_{loc}(X)$.
Next, for $x,y\in B_j$,
\begin{align*}
|u_\eps(x)-u_\eps(y)|=\bigg\vert\sum_iu_{B_i}[\pip_i^\eps(x)-\pip_i^\eps(y)]\bigg\vert
  &=\bigg\vert\sum_i[u_{B_i}-u_{B_j}][\pip_i^\eps(x)-\pip_i^\eps(y)]\bigg\vert\\
  &\le \frac{d(x,y)}{\eps}\sum_{i\, :\, 2B_i\cap B_j\ne\emptyset}|u_{B_i}-u_{B_j}|\\
  &\le \frac{C\, d(x,y)}{\eps}\sum_{i\, :\, 2B_i\cap B_j\ne\emptyset}\jint_{5B_j}|u-u_{5B_j}|\, d\mu.
\end{align*}
It follows from the bounded overlap of $5B_i$ that
\[
\text{Lip}u_\eps(x)\le \frac{C}{\eps}\jint_{5B_j}|u-u_{5B_j}|\, d\mu
\]
whenever $x\in B_j$. 
Integrating  the above inequality over $X=\bigcup_jB_j$, we obtain
\begin{align}
\int_X\text{Lip}u_\eps\, d\mu &\le \frac{C}{\eps}\sum_j\mu(B_j)\, \jint_{5B_j}|u-u_{5B_j}|\, d\mu\notag\\
  &\le \frac{C}{\eps}\sum_j\int_{5B_j}|u-u_{5B_j}|\, d\mu\notag\\
  &\le \frac{C}{\eps}\sum_j \eps\, \Vert D_{\AM}u\Vert(5\lambda B_j)\notag\\
  &\le C\, \Vert D_{\AM}u\Vert(X).\label{eq:approx-Lip}
\end{align}
Thus 
\[
  \liminf_{\eps\to 0^+}\int_Xg_{u_\eps}\, d\mu\le C\, \Vert D_{\AM}u\Vert(X)<\infty,
\]
and as $u_\eps\to u$ in $L^1(X)$, it follows that $u\in \BV(X)$ with
$\Vert Du\Vert(X)\le C\, \Vert D_{\AM}u\Vert(X)$. 

We also have $\Vert D_{\AM}u\Vert(X)\le \Vert Du\Vert(X)$, as we now show.
Suppose now that $\Vert Du\Vert(X)$ is finite, and let $u_k\in \BV(X)$ be such that $u_k\to u$ in $L^1(X)$ and 
$\lim_{k\to\infty}\int_Xg_{u_k}\, d\mu=\Vert Du\Vert(X)$. By passing to a subsequence if necessary, we may also assume that
$u_k\to u$ pointwise almost everywhere in $X$ as well. 
For each $k\in\N$ we choose an upper gradient $g_k$ of $u_k$ such that $\int_Xg_k\, d\mu\le \int_X g_{u_k}\, d\mu+\eps/2^k$.
We set $N$ to be the collection of all points
$x\in X$ at which $u_k(x)$ does not converge to $u(x)$. Then $\mu(N)=0$, and so the $1$-modulus of the collection 
$\widehat{\Gamma}_N^+$ of non-constant compact rectifiable curves $\gamma$ in $X$ for which $\mathcal{H}(\gamma^{-1}(N))>0$
is zero. Using 
\cite[(7.8)]{He},
we know that
the collection $\Gamma^+_N$ of all non-constant compact rectifiable curves in $X$ with a subcurve in $\widehat{\Gamma}_N^+$
is also of $1$-modulus zero. Let $\gamma$ be a non-constant compact rectifiable curve in $X$ with $\gamma\not\in\Gamma_N^+$.
By re-parametrizing if necessary, we now assume that $\gamma:[a,b]\to X$ is arc-length parametrized; 
then $\mathcal{H}^1([a,b]\setminus\gamma^{-1}(N))=0$. For $s,t\in[a,b]\setminus\gamma^{-1}(N)$ with
$s>t$ we have that
\[
|u(\gamma(t))-u(\gamma(s))|=\lim_{k\to\infty}|u_k(\gamma(t))-u_k(\gamma(s))|
  \le \liminf_{k\to\infty}\int_{\gamma\vert_{[t,s]}}\, g_k\, ds\le \liminf_{k\to\infty}\int_\gamma g_k\, ds.
\]
This verifies that  $(g_k)$ is a $\BV_{\AM}$-upper bound for $u$ in the sense of 
Definition~\ref{def:BV-AM}.
\end{proof}

\begin{prop}\label{prop:N1p-AM-PI}
If $X$ supports an $\AM$-Poincar\'e 
inequality and $\mu$ is doubling, then $N^{1,1}_{\AM}(X)=N^{1,1}(X)$ with comparable
norms. 
\end{prop}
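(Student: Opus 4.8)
The plan is to follow the template of the proof of Proposition~\ref{prop:BV-AM-PI}, but to squeeze a genuine $L^1$ upper gradient of $u$ out of the discrete convolutions rather than merely a $\BV$ approximation. The easy half is the inclusion $N^{1,1}(X)\subseteq N^{1,1}_{\AM}(X)$ together with $\|u\|_{N^{1,1}_{\AM}(X)}\le\|u\|_{N^{1,1}(X)}$: since $\AM(\Gamma)\le\Mod_1(\Gamma)$ for every curve family $\Gamma$, a $\Mod_1$-exceptional family is $\AM$-exceptional, so a $1$-weak upper gradient of $u$ is automatically a weak $\AM$-upper gradient of $u$. The real task is therefore the reverse inclusion and the reverse norm estimate, so fix $u\in N^{1,1}_{\AM}(X)$ and a weak $\AM$-upper gradient $g\in L^1(X)$.

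The first step I would record is that $\Vert D_{\AM}u\Vert\le g\,d\mu$ as measures. Indeed, the construction in Lemma~\ref{lem:AMweak-AMupper} produces, for each $\eps>0$, a $\BV_{\AM}$-upper bound of $u$ of the form $(g+\eps\widetilde g_i)_i$ with $\liminf_i\|\widetilde g_i\|_{L^1(X)}<\infty$; localizing this, one gets $\Vert D_{\AM}u\Vert(V)\le\liminf_i\int_V(g+\eps\widetilde g_i)\,d\mu\le\int_V g\,d\mu+\eps\liminf_i\|\widetilde g_i\|_{L^1(X)}$ for open $V$, and letting $\eps\to0^+$, together with the fact that $\Vert D_{\AM}u\Vert$ is a Radon measure (\cite[Theorem~3.4]{M1}), gives the claim. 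In particular $u\in\BV_{\AM}(X)$ with $\Vert D_{\AM}u\Vert(X)\le\|g\|_{L^1(X)}$. Next, localizing as in Remark~\ref{rem:truncation} we may assume $X$ is bounded, hence $\mu(X)<\infty$; and I would run the discrete-convolution argument from the proof of Proposition~\ref{prop:BV-AM-PI}: for $\eps>0$ pick a cover of $X$ by balls $B_i=B(x_i,\eps)$ with $\{5\lambda B_i\}$ of bounded overlap, a subordinate partition of unity $\{\varphi_i^\eps\}$, and set $u_\eps=\sum_i u_{B_i}\varphi_i^\eps$. That proof gives $u_\eps\to u$ in $L^1(X)$, each $u_\eps$ locally Lipschitz, and $\Lip u_\eps(x)\le C\eps^{-1}\jint_{5B_j}|u-u_{5B_j}|\,d\mu$ whenever $x\in B_j$. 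Applying the $\AM$-Poincar\'e inequality in the form $\jint_B|u-u_B|\,d\mu\le C\,\text{rad}(B)\,\Vert D_{\AM}u\Vert(\lambda B)/\mu(\lambda B)$ to $B=5B_j$ and then using the measure bound above, I obtain $\jint_{5B_j}|u-u_{5B_j}|\,d\mu\le C\eps\,\jint_{5\lambda B_j}g\,d\mu$, so that (assuming $\lambda\ge1$ and using doubling)
\[
\Lip u_\eps(x)\le C\jint_{5\lambda B_j}g\,d\mu\le C\jint_{B(x,6\lambda\eps)}g\,d\mu=:C\,g_\eps(x)\qquad(x\in B_j).
\]
Thus $\Lip u_\eps$ is dominated \emph{pointwise} by the scale-$\eps$ ball-averages of the fixed $L^1$ function $g$, not merely bounded in $L^1$-norm.

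Finally I would use the standard fact that $g_\eps\to g$ in $L^1(X)$ as $\eps\to0^+$ (approximate $g$ in $L^1$ by Lipschitz functions and use the doubling estimate $\int_{B(y,r)}\mu(B(x,r))^{-1}\,d\mu(x)\le C$). Hence $\{g_\eps\}_{0<\eps\le1}$, and therefore $\{\Lip u_\eps\}_{0<\eps\le1}$ by the bound above, is bounded in $L^1(X)$ and equi-integrable; since $\mu(X)<\infty$, the Dunford--Pettis theorem yields $\eps_k\downarrow0$ and a nonnegative $\widetilde g\in L^1(X)$ with $\Lip u_{\eps_k}\rightharpoonup\widetilde g$ weakly in $L^1(X)$ and $\|\widetilde g\|_{L^1(X)}\le\liminf_k\|\Lip u_{\eps_k}\|_{L^1(X)}\le C\|g\|_{L^1(X)}$. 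By Mazur's lemma, suitable finite convex combinations $\widehat u_m$ of the $u_{\eps_k}$ converge to $u$ in $L^1(X)$ while the corresponding convex combinations $\widehat g_m$ of the $\Lip u_{\eps_k}$ (each an upper gradient of the locally Lipschitz $\widehat u_m$) converge to $\widetilde g$ in $L^1(X)$; by Fuglede's lemma and the $L^1$-closure of $1$-weak upper gradients (see~\cite{He}), $\widetilde g$ is a $1$-weak upper gradient of $u$. Hence $u\in N^{1,1}(X)$ with $\|u\|_{N^{1,1}(X)}\le\|u\|_{L^1(X)}+C\|g\|_{L^1(X)}$, and taking the infimum over weak $\AM$-upper gradients $g$ of $u$ gives $\|u\|_{N^{1,1}(X)}\le C\,\|u\|_{N^{1,1}_{\AM}(X)}$.

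I expect the main obstacle to be precisely this passage from $\BV$-type control to Newtonian control. In Proposition~\ref{prop:BV-AM-PI} only the $L^1$-bound $\int_X\Lip u_\eps\,d\mu\le C\,\Vert D_{\AM}u\Vert(X)$ is needed, and that alone cannot produce an $L^1$ upper gradient of $u$ (a set of finite perimeter lies in $\BV$ but not in $N^{1,1}$). Here the absolute continuity $\Vert D_{\AM}u\Vert\le g\,d\mu$ with $g\in L^1$ — not just finiteness of the total mass — is what makes the pointwise bound $\Lip u_\eps\le C g_\eps$ available, and that bound is in turn what upgrades $L^1$-boundedness of the approximating gradients to equi-integrability, hence to a weak $L^1$ limit. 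Getting this chain right, together with the correct localization of all the estimates and the $L^1$-convergence $g_\eps\to g$, is where the technical work sits.
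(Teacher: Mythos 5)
Your proposal is correct, and while it opens exactly as the paper does (easy inclusion via $\AM\le\Mod_1$; derive the ball inequality $\jint_B|u-u_B|\,d\mu\le C\,\mathrm{rad}(B)\jint_{\lambda B}g\,d\mu$ from the $\AM$-Poincar\'e inequality applied to the upper bounds $g+\eps\widetilde g_i$; run the discrete convolutions $u_\eps$), it diverges at the decisive last step. The paper stops at the $\BV$ level: it concludes $u\in\BV(X)$ from Proposition~\ref{prop:BV-AM-PI}, localizes the estimate~\eqref{eq:approx-Lip} over open sets $U$ with $\mu(\partial U)=0$ to get $\Vert Du\Vert(U)\le C\int_U g\,d\mu$, hence $\Vert Du\Vert\ll\mu$, and then invokes the external result~\cite[Theorem~4.6]{HKLL} (absolute continuity of the variation measure implies membership in $N^{1,1}$, under doubling and a $1$-Poincar\'e inequality). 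You instead keep the \emph{pointwise} domination $\Lip u_\eps\le C g_\eps$ by single-scale averages of the fixed $L^1$ function $g$, use $g_\eps\to g$ in $L^1$ to upgrade the mere $L^1$-bound on $\{\Lip u_\eps\}$ to equi-integrability, and then extract an $L^1$ weak upper gradient of $u$ via Dunford--Pettis, Mazur, and the Fuglede-type closure of weak upper gradients. This in effect reproves the special case of~\cite[Theorem~4.6]{HKLL} that is needed, so your argument is longer but self-contained, and it isolates cleanly why absolute continuity of the gradient measure (and not just finiteness of its mass) is what separates $N^{1,1}$ from $\BV$ here --- a point the paper only remarks on after the proof. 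Two small observations: your intermediate claim $\Vert D_{\AM}u\Vert\le g\,d\mu$ as measures can be bypassed by applying the $\AM$-Poincar\'e inequality directly to the sequence $(g+\eps\rho_i)$ and passing to the limit, as the paper does, which avoids leaning on the localized definition of $\Vert D_{\AM}u\Vert$; and the Dunford--Pettis/Mazur step can be shortcut entirely, since $Cg_\eps$ is itself an upper gradient of $u_\eps$ and converges to $Cg$ strongly in $L^1$, so the closure lemma already yields that $Cg$ is a $1$-weak upper gradient of (a representative of) $u$.
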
 

\begin{proof}
Note that $N^{1,1}(X)\subset N^{1,1}_{\AM}(X)$.
Thus it suffices to prove the reverse inclusion. Let $u\in N^{1,1}_{\AM}(X)$, and let $g\in L^1(X)
$ be a weak AM-upper gradient
of $u$. Let $\Gamma$ be the corresponding exceptional family; then $\AM(\Gamma)=0$. Then 
by the proof of
Lemma~\ref{lem:AMweak-AMupper} there is a sequence $(\rho_i)$ of non-negative Borel functions in $L^1(X)$
such that $\int_X\rho_i\, d\mu\le M<\infty$ for each $i\in\N$ and for each $\gamma\in\Gamma$ 
we have
\[
\lim_{i\to\infty}\int_\gamma\rho_i\, ds=\infty.
\]
Then for each $\eps>0$ we have that $(g+\eps\rho_i)$
forms a $\BV_{\AM}$-upper bound of $u$, and so
as $X$ supports an $\AM$-Poincar\'e inequality,
whenever $B$ is a ball in $X$ we have
\[
\jint_B|u-u_B|\, d\mu\le \frac{C\, \text{rad}(B)}{\mu(B)}\, \liminf_{i\to\infty}\int_{\lambda B}[g+\eps\rho_i]\, d\mu.
\]
As before, by passing to a subsequence if necessary, we may assume that $\rho_i\, d\mu$ 
converges weakly to
a Radon measure $\nu$ on $X$, and so the above turns into 
\[
\jint_B|u-u_B|\, d\mu\le C\, \text{rad}(B)\, \left(\jint_{\lambda B}g\, d\mu+\eps\frac{\nu(2\lambda B)}{\mu(2\lambda B)}\right).
\]
Letting $\eps\to 0$ we get
\[
\jint_B|u-u_B|\, d\mu\le C\, \text{rad}(B)\, \jint_{\lambda B}g\, d\mu.
\]
We now know from Proposition~\ref{prop:BV-AM-PI} that $u\in \BV(X)$.
Now an argument as in the proof of Proposition~\ref{prop:BV-AM-PI},
up to and including~\eqref{eq:approx-Lip}, applied to open sets $U\subset X$ with $\mu(\partial U)=0$, we obtain that
\[
\Vert Du\Vert(U)\le C\, \int_{\overline{U}} g\, d\mu=\int_U g\, d\mu.
\]
Note that $g\in L^1(X)$, and hence for each $\eta>0$ there is some $\eps>0$ such that 
whenever $K\subset X$ is measurable with $\mu(K)<\eps$, we have $\int_Kg\, d\mu<\eta$.
Since whenever $E\subset X$ with $\mu(E)=0$, for each $\eps>0$ we can find an open set $U_\eps\supset E$
such that $\mu(U_\eps)<\eps$ and $\mu(\partial U_\eps)=0$,
it follows that $\Vert Du\Vert\ll \mu$, and hence $u\in N^{1,1}(X)$ by~\cite[Theorem~4.6]{HKLL}.
%
\end{proof}

Note that if $X$ does not support a $1$-Poincar\'e inequality, 
we do not know the equivalence of
$N^{1,1}(X)$ with $N^{1,1}_{\AM}(X)$. Similar difficulties show up in comparing other alternative
notions of $N^{1,1}(X)$ as well, see for example~\cite[Section~8]{ADiM}. 
We will prove in Theorem~\ref{thm:main1} that $X$ supports a $1$-Poincar\'e inequality if and only if it 
supports the a priori stronger $\AM$-Poincar\'e inequality.

The key point in the above proof is that if $u\in \BV(X)$ and $\Vert Du\Vert\ll\mu$, then
$u\in N^{1,1}(X)$; the validity of this point requires a doubling measure 
supporting a $1$-Poincar\'e inequality.
The following counterexample
is from~\cite[Example~7.4]{ADiM}. 
We do not have a counterexample for the statement 
``$\Vert Du\Vert\ll\mu$ implies $u\in N^{1,1}(X)$" in the case $\mu$ is doubling,
but the measure $\mu$ in the following example is asymptotically doubling.

\begin{example}\em
Let $X=\R^2$ be equipped with the Euclidean metric and the measure $\mu=\mathcal{L}^2+\mathcal{H}^1\vert_C$
where $C$ is the boundary of the unit disk $D$ in $\R^2$ centered at the origin. Let $u=\chi_D$. Then,
by the approximations $f_\eps(x)=(1-\eps^{-1}\text{dist}(x,D))_+$ of $u$ we see that 
$u\in \BV(X)$ with $\Vert Du\Vert\equiv\mathcal{H}^1\vert_C$. It follows that $\Vert Du\Vert\ll\mu$.
However, $u\not\in N^{1,1}(X)$: with $\Gamma$ the collection of all line segments $\gamma_x$, $-1<x<1$,
given by $\gamma_x:[-2,2]\to X$ where $\gamma_x(t)=(x,t)$, we have that $u\circ\gamma_x$ is not
absolutely continuous on $[-2,2]$, and furthermore, $\Mod_1(\Gamma)>0$. 
\end{example}

 The existence of a Semmes family of curves provides a key tool for the proof that the 
AM-Poincar\'e inequality and the standard $1$-Poincar\'e inequality are equivalent, which in turn 
allows us to prove equivalence of 
the two classes $\BV(X)$ and $\BV_{\AM}(X)$ with just the assumption of a $1$-Poincar\'e inequality
in addition to the doubling property of $\mu$. 
Thus we next prove that the existence of $1$-Poincar\'e
inequality in the doubling complete metric measure space $X$ is equivalent to the existence of the following 
Semmes pencil of curves. See~\cite{Fass-Orp} for a closely related characterization of the $1$-Poincar\'e 
inequality in terms of $1$-currents in the sense of Ambrosio and Kirchheim~\cite{AK}.

 If $A$ is a Borel subset of $X$ and $\gamma$ is a rectifiable curve, we define 
$\ell(\gamma \cap A):=\mathcal{H}^1(\gamma\cap A)$.

\begin{defn} \label{def:semmes}\em (\cite{Se, He})
A space $X$ supports a \em Semmes pencil of curves \em if there exists a constant $C>0$ such that for 
each pair of points $x,y\in X$ with $x\neq y$ there is a family $\Gamma_{xy}$ of rectifiable curves in $X$ 
equipped with a probability measure $d\sigma=d\sigma_{x,y}$ so that each $\gamma\in \Gamma_{xy}$ is 
a $C$-quasiconvex curve joining $x$ to $y$, and for each Borel set $A\subset X$, the map 
$\gamma\mapsto\ell(\gamma\cap A)$ is $\sigma$-measurable and satisfies
\begin{equation}\label{pencil}
\int_{\Gamma_{xy}}\ell(\gamma\cap A)\, d\sigma(\gamma)\le C\int_{C B_{x,y}\cap A}R_{xy}(z)\, d\mu(z).
\end{equation}
In the previous inequality, for $C>0$, $C B_{x,y}:=B(x,Cd(x,y))\cup B(y,Cd(x,y))$ and
\begin{equation}\label{eq:Riesz}
R_{xy}(z) := \frac{d(x,z)}{\mu(B(x,d(x,z)))}+\frac{d(y,z)}{\mu(B(y,d(y,z)))}.
\end{equation}
\end{defn}

We next show that if the measure on $X$ is doubling and supports a $1$-Poincar\'e inequality, then it supports a Semmes
pencil of curves.

Denote 
\begin{equation}
\Gamma^C_{xy} := \{(\gamma,I)\, :\, \text{ curve }\gamma:I\to X \text{ is $1$-Lipschitz, } \gamma(0)=x, \gamma(\max(I))=y\},
\end{equation}
where $I$ are intervals contained in $[0,C\, d(x,y)]$ with left-hand end point $0$. 
We equip $\Gamma^C_{xy}$ with the following metric.
The elements of $\Gamma^C_{xy}$ can be identified with their graphs 
\[
\Gamma_\gamma = \{(t,\gamma(t))\, :\, t \in I\} \subset \R \times X. 
\]
We define a metric on
$\Gamma^C_{xy}$ by setting 
\[
d(\gamma,\gamma') := d_H(\Gamma_\gamma, \Gamma_{\gamma'}), 
\]
where $d_H$ is the Hausdorff metric. Thanks to the Arzela-Ascoli theorem, 
this metric makes $\Gamma^C_{xy}$ into a complete and compact metric space
because $X$ is complete and doubling and hence closed bounded subsets of $X$ are compact.
For $f \in C(X)$, the functional $\Phi_f : \Gamma^C_{xy} \to \R$ given by
\[
\Phi_f((\gamma,I)) := \int_I f \circ \gamma \,dt,
\]
is continuous on $\Gamma^C_{xy}$. 

We denote the Riesz measure by
\[
d\nmu^C_{xy}(z) = R_{xy} ~d\mu|_{CB_{x,y}}.
\]

\begin{thm} \label{thm:semmesexist} 
If $(X,d,\mu)$ satisfies a $1$-Poincar\'e inequality, then there exists  
$C \geq 1$ such that for every $x,y \in X$ with $x\ne y$, there exist a compact 
family of curves $\Gamma_{xy}$ and a Radon probability measure $\alpha_{xy}$ on 
$\Gamma_{xy}$ which constitutes a Semmes family of curves, i.e. for every Borel set $A$,
\[
\int_{\Gamma^C_{xy}} \int_\gamma \chi_A\, ds\, d\alpha_{xy}(\gamma) 
\leq C \int_{CB_{x,y} \cap A} R_{xy}(z) \, d\mu(z) = \nmu^C_{xy}(A).
\]
\end{thm}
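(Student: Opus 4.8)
The plan is to construct the measure $\alpha_{xy}$ on $\Gamma^C_{xy}$ by a duality/Hahn-Banach argument, exploiting the compactness of $\Gamma^C_{xy}$ established above together with the $1$-Poincaré inequality. The key observation is that the desired inequality, by monotone class / Riesz representation considerations, is equivalent to the statement that for every nonnegative $f \in C(X)$,
\[
\int_{\Gamma^C_{xy}} \Phi_f(\gamma)\, d\alpha_{xy}(\gamma) \le C \int_{CB_{x,y}} f(z)\, R_{xy}(z)\, d\mu(z) = \int_X f\, d\nmu^C_{xy}.
\]
So I would set up the problem of finding a probability measure $\alpha$ on the compact space $\Gamma^C_{xy}$ such that $\int \Phi_f\, d\alpha \le \int f\, d\nmu^C_{xy}$ for all $f \ge 0$ in $C(X)$. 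This is a linear-programming / minimax problem: by the Hahn-Banach separation theorem (or a minimax theorem applied to the bilinear pairing between probability measures on $\Gamma^C_{xy}$ and nonnegative continuous functions $f$ with $\int f\, d\nmu^C_{xy} \le 1$), such an $\alpha$ exists \emph{if and only if} there is no nonnegative $f \in C(X)$ with $\int f\, d\nmu^C_{xy} < \inf_{\gamma \in \Gamma^C_{xy}} \Phi_f(\gamma)$. Equivalently, it suffices to show: for every nonnegative $f \in C(X)$ there exists a $1$-Lipschitz curve $\gamma$ from $x$ to $y$, parametrized on a subinterval of $[0, Cd(x,y)]$, with $\int_\gamma f\, ds \le C\int_{CB_{x,y}} f\, R_{xy}\, d\mu$.

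The heart of the matter is therefore this last curve-existence statement, which is exactly where the $1$-Poincaré inequality (equivalently, by Keith/Heinonen-Koskela type results, the pointwise estimate via maximal functions of the upper gradient) enters. Given nonnegative $f \in C(X)$, I would consider the function $u_f(z) := \inf_\gamma \int_\gamma f\, ds$, the infimum over rectifiable curves from $x$ to $z$; a standard fact in PI spaces is that $f + \delta$ (for $\delta>0$, to avoid degeneracy) is an upper gradient of such a function, and the $1$-Poincaré inequality yields a pointwise estimate of the form $u_f(y) - u_f(x) \lesssim \int_{CB_{x,y}} f\, R_{xy}\, d\mu$ — this is the classical Riesz-potential bound on oscillation of a function in terms of the upper gradient, see the chaining argument in Heinonen-Koskela or Hajłasz-Koskela. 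Since $u_f(x) = 0$ we get $u_f(y) \lesssim \int_{CB_{x,y}} f\, R_{xy}\, d\mu$, and then one selects a near-optimal curve $\gamma$ realizing $\int_\gamma f\, ds$ close to $u_f(y)$; quasiconvexity of $X$ (another consequence of the $1$-Poincaré inequality with doubling) lets us also demand $\ell(\gamma) \le C\, d(x,y)$, possibly after the standard trick of adding a small multiple of arclength to $f$ so that short curves are also controlled in length. Reparametrizing by a constant-speed (hence $1$-Lipschitz after rescaling) parametrization puts $\gamma$ into $\Gamma^C_{xy}$.

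I would organize the proof as: (1) reduce, via Riesz representation on the compact metric space $\Gamma^C_{xy}$ and a monotone class argument, the Borel-set inequality to the inequality $\int \Phi_f\, d\alpha \le \int f\, d\nmu^C_{xy}$ for $f \in C(X)$, $f \ge 0$; (2) invoke a minimax / Hahn-Banach separation theorem — using that $\{\Phi_f : f \in C(X),\ f\ge 0\}$ is a convex cone of continuous functions on the compact space $\Gamma^C_{xy}$ and that probability measures form a weak\textsuperscript{*}-compact convex set — to reduce existence of $\alpha$ to the curve-selection statement; (3) prove the curve-selection statement from the $1$-Poincaré inequality via the chaining/Riesz-potential oscillation estimate plus quasiconvexity, handling the length constraint by the additive-arclength perturbation. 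The main obstacle I anticipate is step (2): making the minimax argument rigorous requires care about which function spaces are paired, the right topology, and verifying the hypotheses of whichever minimax theorem (Sion's, or a direct Hahn-Banach argument in $C(\Gamma^C_{xy})^*$) is used — in particular one must ensure the cone $\{\Phi_f\}$ together with the constant functions is large enough that separation gives a genuine probability measure rather than merely a nonnegative functional, and that the normalization $\alpha(\Gamma^C_{xy}) = 1$ can be arranged (this uses that the curves have uniformly bounded length, so $\Phi_1$ is a bounded positive function on $\Gamma^C_{xy}$). A secondary technical point is verifying measurability of $\gamma \mapsto \ell(\gamma \cap A)$ for Borel $A$, which follows from the already-noted continuity of $\Phi_f$ for continuous $f$ together with a monotone class argument.
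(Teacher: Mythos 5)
Your proposal is correct and follows essentially the same route as the paper: compactness of $\Gamma^C_{xy}$, a minimax/duality argument pairing the weak\textsuperscript{*}-compact convex set of probability measures on $\Gamma^C_{xy}$ against the convex set $G=\{f\in C(X): 0\le f\le 1\}$ (the paper invokes Rudin's Min-Max Theorem, Proposition~\ref{thm:minmax}), followed by a limiting argument to pass from continuous $f$ to Borel indicators. The only point of divergence is the curve-existence input $\inf_{\gamma\in\Gamma^C_{xy}}\int_\gamma f\,ds\le C\int_X f\,d\nmu^C_{xy}$: the paper gets this by citing Keith's modulus lower bound \cite[Theorem~2]{K} and discarding over-long curves via subadditivity of the modulus, whereas you re-derive it from the $1$-Poincar\'e inequality via $u_f=\inf_\gamma\int_\gamma f\,ds$ and the Riesz-potential chaining estimate, controlling lengths with the additive-arclength perturbation --- which is essentially the proof of Keith's estimate, so the underlying mathematics is the same.
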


The proof of the above statement could be derived by a careful application of the techniques in \cite{AMS} combined 
with the modulus estimates of~\cite{K}. However, the method in~\cite{AMS} directly works only for $p>1$, 
and some additional care is necessary for $p=1$. Further, the following proof is somewhat 
more direct than theirs. Our proof is more in line with the approaches in~\cite{B,S} 
in combination with the estimates from~\cite{K}
to construct probability measures on the space of curve fragments. The papers~\cite{B,S} employ the 
Rainwater lemma from~\cite[Theorem 9.4.3]{R2}. However, we are able to avoid this lemma 
by directly using the Min-Max theorem~\cite[Theorem 9.4]{R2}, restated below for the reader's
convenience. 

\begin{prop}{{\em (Min-Max Theorem~\cite[Theorem~9.4.2]{R2})}} \label{thm:minmax} 
Suppose that
\begin{enumerate}[(i)]
\item $G$ is a convex subset of some vector space,
\item $K$ is a compact convex subset of some topological vector space, and
\item $F : G \times K \to \R$ satisfies
\begin{enumerate}[(a)]
\item $F(\cdot, y)$ is convex on $G$ for every $y \in K$,
\item $F(x,\cdot)$ is concave and continuous on $K$ for every $x \in G$.
\end{enumerate}
\end{enumerate}
Then
\[
\sup_{y \in K} \inf_{x \in G} F(x,y) = \inf_{x \in G} \sup_{y \in K} F(x,y).
\]
\end{prop}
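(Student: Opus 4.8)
The plan is to prove the nontrivial inequality $\inf_{x\in G}\sup_{y\in K}F(x,y)\le \sup_{y\in K}\inf_{x\in G}F(x,y)$, since the reverse inequality holds for free: for any fixed $x_0,y_0$ one has $\inf_x F(x,y_0)\le F(x_0,y_0)\le\sup_y F(x_0,y)$, and taking $\sup_{y_0}$ and then $\inf_{x_0}$ yields weak duality. Arguing by contradiction, suppose the two values differ and fix a real number $c$ with $\sup_y\inf_x F(x,y)<c<\inf_x\sup_y F(x,y)$. For each $x\in G$ set $C_x:=\{y\in K:F(x,y)\ge c\}$. Since $F(x,\cdot)$ is concave and continuous, $C_x$ is closed and convex, and it is nonempty, because $\sup_y F(x,y)\ge\inf_{x'}\sup_y F>c$ and this supremum is attained on the compact set $K$. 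Thus each $C_x$ is a closed subset of the compact space $K$, so if I can show that the family $\{C_x\}_{x\in G}$ has the finite intersection property, then $\bigcap_{x\in G}C_x\ne\emptyset$; any $y^*$ in this intersection satisfies $\sup_y\inf_x F(x,y)\ge\inf_x F(x,y^*)\ge c$, contradicting $c>\sup_y\inf_x F$. Everything therefore reduces to establishing the finite intersection property.

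To establish it, fix finitely many points $x_1,\dots,x_n\in G$ and consider the \emph{achievable-value set}
\[
S:=\Big\{u\in\R^n:\ \exists\,y\in K\text{ with }u_i\le F(x_i,y)\text{ for all }i\Big\}.
\]
This set is convex: if $u,u'\in S$ are witnessed by $y,y'\in K$, then for $t\in[0,1]$ the point $ty+(1-t)y'$ lies in $K$ by convexity and, by concavity of each $F(x_i,\cdot)$, witnesses $tu+(1-t)u'$. It is closed: given $u^\alpha\to u$ with witnesses $y^\alpha$, compactness of $K$ yields a subnet $y^{\alpha'}\to y\in K$, and continuity of $F(x_i,\cdot)$ gives $F(x_i,y)\ge\lim u_i^{\alpha'}=u_i$. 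Finally $S$ is downward closed. Now $\bigcap_i C_{x_i}\ne\emptyset$ is precisely the assertion that the diagonal point $(c,\dots,c)$ lies in $S$, so suppose instead that $(c,\dots,c)\notin S$. Since $S$ is a closed convex subset of $\R^n$, the finite-dimensional separation theorem provides $p\in\R^n\setminus\{0\}$ and $\alpha\in\R$ with $\langle p,(c,\dots,c)\rangle>\alpha\ge\langle p,u\rangle$ for every $u\in S$. Because $S$ is downward closed, $p\ge 0$ (a negative coordinate would make $\langle p,u\rangle$ unbounded above on $S$), and after rescaling I may assume $\sum_i p_i=1$, so $p$ is a probability vector.

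The separating functional then forces the contradiction. Setting $x_p:=\sum_i p_i\,x_i$, which lies in $G$ by convexity of $G$, and testing the separation on the points $u=(F(x_1,y),\dots,F(x_n,y))\in S$ gives $\sum_i p_i F(x_i,y)\le\alpha<c$ for every $y\in K$; convexity of $F(\cdot,y)$ then yields $F(x_p,y)\le\sum_i p_i F(x_i,y)<c$ for all $y$, whence $\sup_y F(x_p,y)\le\alpha<c$. But $x_p\in G$ forces $\sup_y F(x_p,y)\ge\inf_x\sup_y F>c$, a contradiction. Hence $(c,\dots,c)\in S$, the finite intersection property holds, and the theorem follows. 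The heart of the argument, and the only place where the two convexity hypotheses interact, is this finite-dimensional step: concavity and continuity of $F$ in $y$ together with compactness of $K$ make the achievable-value set $S$ a \emph{closed convex} subset of $\R^n$, while convexity of $F$ in $x$ turns the separating probability vector into an admissible competitor $x_p\in G$. I expect this to be the main obstacle. It could alternatively be handled by von Neumann's classical finite minimax theorem via Brouwer's fixed point theorem, or by Sion's two-point intermediate-value lemma followed by an induction on the number of sets, but the separation argument above is the most self-contained, invoking only finite-dimensional Hahn--Banach separation.
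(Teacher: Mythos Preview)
Your argument is correct. The weak-duality direction is trivial, and your contradiction argument for the hard direction---reducing to the finite intersection property of the superlevel sets $C_x=\{y\in K:F(x,y)\ge c\}$, then handling finitely many $x_1,\dots,x_n$ by studying the achievable-value set $S\subset\R^n$ and applying finite-dimensional separation---is the classical Kneser--Fan approach and is carried out carefully. The key steps (closedness of $S$ via compactness of $K$ and continuity of $F(x_i,\cdot)$, nonnegativity of the separating vector from downward-closedness, and the use of convexity of $G$ and of $F(\cdot,y)$ to build the competitor $x_p$) are all sound.

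There is nothing to compare against in the paper itself: the proposition is stated without proof and simply cited from Rudin~\cite[Theorem~9.4.2]{R2}; it is used as a black box in the proof of Theorem~\ref{thm:semmesexist}. Your write-up supplies a complete, self-contained proof where the paper gives none. For what it is worth, Rudin's own proof follows essentially the same route (finite-dimensional separation after reducing to finitely many points of $G$), so your argument is in line with the cited source as well.
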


\begin{proof}[Proof of Theorem~\ref{thm:semmesexist}] 
Denote $d(x,y) = r$. By the $1$-Poincar\'e inequality 
and~\cite[Theorem~2]{K}, there exists a $C$ such that we have
\[
\Mod_{1,\nmu^C_{xy}}(\Gamma^C_{xy}) = \inf \int_X \rho \, d\nmu^C_{xy}>\frac{1}{C},
\]
where the infimum is over non-negative Borel functions $\rho$ with $\int_\gamma \rho \geq 1$ for every 
$\gamma \in \Gamma^C_{xy}$. Note that the estimates in~\cite{K} give 
the modulus bound for the set of all rectifiable curves between $x,y$, 
but the collection of curves that are longer than $4C^2d(x,y)$ has modulus less than 
$1/(2C)$, and can be excluded using the subadditivity of the modulus. 

Another way of stating this estimate is that if $f$ is a non-negative continuous function, and 
$\int_X f\, d\nmu^C_{xy} < \infty$, then for every $\epsilon>0$ there exists a $\gamma \in \Gamma^C_{xy}$ such that
\[
\int_\gamma f ~ds \leq (C+\epsilon) \int_X f \,d\nmu^C_{xy},
\]
for otherwise, $\frac{f}{(C+\epsilon)\int_X f \, d\nmu^C_{xy}}$ would be admissible with a too small a norm. In particular,
\begin{equation}\label{eq:optim1}
\inf_{(\gamma,I) \in \Gamma^C_{xy}} \int_\gamma f \,ds \leq C \int_X f \,d\nmu^C_{xy}.
\end{equation}
Since $f$ is continuous and $\Gamma^C_{xy}$ is a compact family, the above infimum is a minimum.
Parametrizing the curves $\gamma$ by length we also get
\begin{equation} \label{eq:modest}
\int_{(\gamma,I) \in \Gamma^C_{xy}} \int_I f \circ \gamma(t) \, dt \, d\beta(\gamma, I)\leq C \int_X f \,d\nmu^C_{xy},
\end{equation}
where $\beta$ is the Dirac measure on $\Gamma^C_{xy}$ based at any of the optimal 
choices $(\gamma,I)$ that 
achieves the infimum in~\eqref{eq:optim1}.

Let $K$ be the set of probability measures $\alpha$ on $\Gamma^C_{xy}$; thus $K$ is a compact and convex set of 
measures with respect to weak* convergence. Set
\[
G = \left\{\ f : X \to [0,1] \ \left|\  f \text{ is continuous } \right.\right\} \subset C(X).
\]
Here $C(X)$ is the set of all continuous functions equipped with the uniform topology and $G$ is a closed convex subset thereof.
Then define $F : G \times K \to \R$ by
\[
F(f,\alpha) = C \int_X f \,d\nmu^C_{xy}-\int_{\Gamma^C_{xy}} \int_I f(\gamma(t)) \,dt \,d\alpha(\gamma,I).
\]
Clearly $F$ is continuous in $\alpha$, since $\Phi_f((\gamma,I)) = \int_I f(\gamma(t))\, dt$ is continuous in $\gamma$. Also, 
$F(\cdot,\alpha)$ is convex for every $\alpha \in K$, and $F(f,\cdot)$ is affine and {\em a fortiori} 
concave for any $f \in G$. Thus, we can apply Proposition~\ref{thm:minmax} to obtain
\[
\sup_{\alpha \in K} \inf_{f \in G} F(f,\alpha) = \inf_{f \in G} \sup_{\alpha \in K} F(f,\alpha).
\]
Now, for $f\in G$, by estimate~\eqref{eq:optim1} we have $\sup_{\alpha \in K} F(f,\alpha) \geq 0$. 
Thus, we get
\[
\sup_{\alpha \in K} \inf_{f \in G} F(f,\alpha) \geq 0.
\]
In particular, for every $\epsilon>0$ and every $f\in G$ there exists a $\alpha_\epsilon \in K$, such that
\[
F(f,\alpha_\epsilon) \geq -\epsilon.
\]
Since for each $f\in G$ the map $K\ni\alpha\mapsto F(f,\alpha)$ is continuous, 
we can extract a weakly convergent sequence 
$\alpha_{\epsilon_i}\rightharpoonup \alpha_{xy} \in K$ (with $\epsilon_i \to 0$ as $i\to\infty$), such that for every $f \in G$
\[
F(f,\alpha_{xy}) \geq 0.
\]
Now, recalling the definition of $F$,  for every $f \in G$,
\[
\int_{\Gamma^C_{xy}} \int_I f(\gamma(t)) \, dt \, d\alpha_{xy}(\gamma,I) \leq C\int_X f \, d\nmu^C_{xy}.
\]
Also, since the curves $\gamma$ are $1$-Lipschitz, it follows that $\int_\gamma f ~ds \leq \int_I f(\gamma(t))\, dt$, and 
$\alpha_{xy}$ induces a measure (which we denote by the same symbol) on 
$\Gamma_{xy} = \{\gamma\, :\,  (\gamma,I) \in \Gamma^C_{xy}\text{ for some }I\}$. With respect to 
this measure, we have for every $f \in C(X)$ with $0 \leq f \leq 1$ that
\[
\int_{\Gamma_{xy}} \int_\gamma f \,ds ~d\alpha_{xy}(\gamma) \leq C\int_X f \,d\nmu^C_{xy}.
\]
 By a limiting argument we obtain the same inequality for $f=\chi_A$ corresponding to
 Borel sets $A\subset X$, and thus the measure $\sigma_{xy}=\alpha_{xy}$, which is supported on the compact set 
 $\Gamma_{xy}$, constitutes a Semmes family of curves in the sense of Definition \ref{def:semmes}, and the proof is complete. 
\end{proof}

Each Borel function in $L^1_{loc}(X)$ can be approximated by simple Borel functions. Hence
it follows from~\eqref{pencil} that 
\begin{equation}\label{pencil2}
\int_{\Gamma_{xy}}\int_\gamma g \, ds \, d\sigma(\gamma)\le C\int_{C B_{x,y}}g(z)R_{xy}(z)\, d\mu(z),
\end{equation}
for Borel functions $g:C B_{x,y}\to\R$. Doubling metric measure spaces supporting a Semmes pencil 
curves support a $1$-Poincar\'e inequality (see e.g. the discussion following \cite[Definition 14.2.4]{Se}).
In what follows we prove that they also support the $\AM$-Poincar\'e inequality.
Recall that
$$
I_A(u)(x)=\int_A\frac{u(z) d(x,z)}{\mu(B(x,d(x,z)))}\, d\mu(z)
$$
denotes the \em Riesz potential \em of a non-negative function $u$ defined on $X$ on a subset $A\subset X$.

\begin{prop}\label{SemmesAM}
If $X$ supports a Semmes pencil of curves, then $X$ supports the
$\AM$-Poincar\'e inequality.
\end{prop}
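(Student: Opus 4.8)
The plan is to fix a ball $B = B(x_0,r)$ and a $\BV_{\AM}$-upper bound $(g_i)$ of $u$, and to estimate $\jint_B |u-u_B|\,d\mu$ by a double average over pairs of points, exactly as in the classical telescoping proof of the Poincar\'e inequality from a Semmes pencil. First I would write
\[
\jint_B|u-u_B|\,d\mu \le \jint_B\jint_B |u(x)-u(y)|\,d\mu(y)\,d\mu(x),
\]
and for each admissible pair $x,y$ bound $|u(x)-u(y)|$ using the Semmes family $\Gamma_{xy}$: since $(g_i)$ is a $\BV_{\AM}$-upper bound, for $\mathcal H^1$-a.e.\ pair of points on a curve $\gamma$ that avoids the exceptional family we have $|u(\gamma(s))-u(\gamma(t))|\le \liminf_i \int_{\gamma|_{[s,t]}} g_i\,ds$; applying this at the endpoints gives $|u(x)-u(y)| \le \liminf_i \int_\gamma g_i\,ds$ for $\sigma_{xy}$-a.e.\ $\gamma$ (using that the exceptional family has $\AM$-measure zero and hence, by the argument producing a sequence $\widetilde g_i$ as in Lemma~\ref{lem:AMweak-AMupper}, can be absorbed). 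Integrating against $d\sigma_{xy}$ and using Fatou's lemma to pull the $\liminf$ outside, then invoking \eqref{pencil2}, yields
\[
|u(x)-u(y)| \le \liminf_{i\to\infty} \int_{\Gamma_{xy}}\int_\gamma g_i\,ds\,d\sigma_{xy}(\gamma) \le C\,\liminf_{i\to\infty}\int_{CB_{x,y}} g_i(z)\,R_{xy}(z)\,d\mu(z).
\]

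Next I would integrate this in $x$ and $y$ over $B\times B$. Since $x,y\in B$ we have $CB_{x,y}\subset \lambda B$ for $\lambda = 3C$ (say), so $g_i$ is integrated only over $\lambda B$. The task is then to show that
\[
\jint_B\jint_B \int_{\lambda B} g_i(z)\,R_{xy}(z)\,d\mu(z)\,d\mu(y)\,d\mu(x) \le C'\,r\,\jint_{\lambda B} g_i\,d\mu,
\]
which, after applying Fubini, reduces to the standard estimate
\[
\jint_B\jint_B R_{xy}(z)\,d\mu(x)\,d\mu(y) \le C'\,\frac{r}{\mu(B)}\qquad\text{for } z\in\lambda B.
\]
This last inequality is where the doubling property of $\mu$ does the work: each of the two terms in $R_{xy}(z)$ is handled by a dyadic decomposition of $B$ into annuli $\{w : 2^{-k}r \le d(w,z) < 2^{-k+1}r\}$, on which $d(w,z)/\mu(B(w,d(w,z))) \approx 2^{-k}r/\mu(B)$ by doubling, and summing the geometric series in $k$. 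One averages one of the variables $x$ (or $y$) out trivially since $R_{xy}(z)$ depends on $x$ only through the first term; this is the familiar bound on averaged Riesz potentials, essentially $\jint_B I_{\lambda B}(\chi_{\lambda B})\,d\mu \lesssim r$.

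Assembling the pieces: combining the two displays and taking $\liminf_i$ (using Fatou once more to interchange it with the $x,y$-integration) gives $\jint_B|u-u_B|\,d\mu \le C\,r\,\liminf_i \jint_{\lambda B} g_i\,d\mu$, which is precisely the $\AM$-Poincar\'e inequality. The main obstacle I anticipate is the measurability and $\liminf$-bookkeeping in the first step: one must be careful that for $\sigma_{xy}$-a.e.\ curve the $\BV_{\AM}$-inequality genuinely holds at the two endpoints $x,y$ (not merely at $\mathcal H^1$-a.e.\ pair on the curve), and that the exceptional $\AM$-null family does not interfere — this is handled by replacing $(g_i)$ with $(g_i + \eps\widetilde g_i)$ as in the proof of Lemma~\ref{lem:AMweak-AMupper}, which forces every curve (including those in the exceptional family) to satisfy the inequality, and then letting $\eps\to 0$ at the end after the weak* limit argument of Remark~\ref{rem:truncation}. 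The Riesz-potential estimate itself is routine given doubling, and the telescoping/Semmes part is standard.
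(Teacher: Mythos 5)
Your overall architecture --- the double average over $B\times B$, a pointwise bound via the Semmes pencil, then Fubini together with the averaged Riesz-potential estimate $\jint_B\jint_B R_{xy}(z)\,d\mu(x)\,d\mu(y)\le Cr/\mu(B)$ for $z\in\lambda B$ --- is sound, and the final step is actually more direct than the paper's, which instead derives the pointwise Haj\l asz-type bound $|u(x)-u(y)|\le Cd(x,y)(h_B(x)+h_B(y))$ with $h_B$ a restricted maximal function of the weak* limit measure $\nu$, proves a weak-$(1,1)$ estimate for $h_B$, and invokes the Poincar\'e inequality for the class $M^{1,q}(B)$ with $0<q<1$. However, there is a genuine gap at exactly the step you flag as the main obstacle, and the fix you propose does not work. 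Replacing $(g_i)$ by $(g_i+\eps\widetilde g_i)$ as in Lemma~\ref{lem:AMweak-AMupper} only repairs the exceptional \emph{family of curves} $\Gamma$ with $\AM(\Gamma)=0$, by forcing $\liminf_i\int_\gamma \widetilde g_i\,ds=\infty$ for $\gamma\in\Gamma$. It does nothing about the exceptional \emph{parameter set} $N_\gamma$ with $\mathcal{H}^1(N_\gamma)=0$ on a \emph{good} curve: by Definition~\ref{def:BV-AM}, inequality~\eqref{eq:AM-uppr} is only guaranteed for $\mathcal{H}^1$-a.e.\ pair $s,t$, the endpoint parameters of $\gamma\in\Gamma_{xy}$ may lie in $N_\gamma$ for every single $\gamma\in\Gamma_{xy}$, and adding $\eps\widetilde g_i$ does not shrink $N_\gamma$ when $\liminf_i\int_\gamma\widetilde g_i\,ds<\infty$. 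So you cannot simply ``apply this at the endpoints.''

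The paper's proof spends most of its length on precisely this point. One fixes Lebesgue points $x,y$ of $u$, sets $E_\eps(x)=\{z:|u(z)-u(x)|>\eps\}$, and chooses radii $r_i\downarrow 0$ with $\mu(B(x,r_i)\cap E_\eps(x))/\mu(B(x,r_i))<2^{-i}/(2C_d)$. Applying the Semmes inequality~\eqref{pencil} to $A=E_\eps(x)\cap B(x,r_i)\setminus B(x,r_i/2)$ shows that the family $\Gamma_i(x)$ of curves whose trace in that annulus lies $\mathcal{H}^1$-a.e.\ in $E_\eps(x)$ satisfies $\sigma_{xy}(\Gamma_i(x))\le 2^{-i}$; a Borel--Cantelli argument then shows that $\sigma_{xy}$-a.e.\ $\gamma$ carries, for infinitely many $i$, a positive $\mathcal{H}^1$-measure of parameters near $x$ at which $u$ is within $\eps$ of $u(x)$ and which avoid $N_\gamma$. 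Choosing such points $x_i\to x$ and $y_i\to y$ and using~\eqref{eq:AM-uppr} between them gives $|u(x)-u(y)|-2\eps\le\liminf_i\int_\gamma g_i\,ds$ for $\sigma_{xy}$-a.e.\ $\gamma$, after which your Fatou--Fubini--Riesz computation (or the paper's Haj\l asz route) closes the argument. Without an argument of this type your proposal does not go through; with it, the rest of your plan is fine.
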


\begin{proof}
Let  $u\in L^1_{loc}(X)$ and let $(g_i)$ be a $\BV$-upper 
bound of $u$, and let $N$ be the collection of all points $x\in X$ for which
\[
\limsup_{r\to 0^+}\jint_{B(x,r)}|u-u(x)|\, d\mu>0;
\]
Then $\mu(N)=0$. We focus on points $x,y\in X\setminus N$. Then for each $\eps>0$ we know that the sets
\[
E_\eps(x):=\{z\in X\, :\, |u(z)-u(x)|>\eps\}, \qquad E_\eps(y)=\{z\in X\, :\, |u(z)-u(y)|>\eps\}
\]
satisfy
\[
\limsup_{r\to 0^+}\frac{\mu(B(x,r)\cap E_\eps(x))}{\mu(B(x,r))}=0=\limsup_{r\to 0^+}\frac{\mu(B(y,r)\cap E_\eps(y))}{\mu(B(y,r))}.
\]
We can inductively choose a strictly decreasing sequence $r_i>0$ such that $r_1<d(x,y)/4$, 
$r_{i+1}<r_i/4$, and
\[
\frac{\mu(B(x,r_i)\cap E_\eps(x))}{\mu(B(x,r_i))}<\frac{2^{-i}}{2C_d},\qquad 
\frac{\mu(B(y,r_i)\cap E_\eps(y))}{\mu(B(y,r_i))}<\frac{2^{-i}}{2C_d}.
\]
For each $i$ let $\Gamma_i(x)$ denote the collection of all $\gamma\in\Gamma_{xy}$ such that 
\[
\mathcal{H}^1(\gamma^{-1}([B(x,r_i)\setminus B(x,r_i/2)]\setminus E_\eps(x)))=0, 
\]
and $\Gamma_i(y)$ the analogous family with $y$ playing the role of $x$. By the fact that $\Gamma_{xy}$ is a Semmes
family and by the fact that $\mu$ is doubling, we have that 
\[
\frac{r_i}{2}\sigma_{xy}(\Gamma_i(x))
\le \int_{\Gamma_{xy}}\ell(\gamma\cap E_\eps(x)\cap B(x,r_i)\setminus B(x,r_i/2))\, d\sigma_{xy}(\gamma)
\le C_d\, \frac{r_i}{\mu(B(x,r_i))}\, \mu(E_\eps(x)\cap B(x,r_i)),
\]
and so by the choice of $r_i$ we have
\[
\sigma_{xy}(\Gamma_i(x))\le 2^{-i}.
\]
Hence for each positive integer $n$,
\[
\sigma_{xy}\left(\bigcup_{i=n}^\infty \Gamma_i(x)\right)\le 2^{1-n},
\]
and so with
\[
\Gamma(x)=\bigcap_{n\in\N}\bigcup_{i=n}^\infty \Gamma_i(x),
\]
we have that $\sigma_{xy}(\Gamma(x))=0$. Note that if $\gamma\in\Gamma_{xy}\setminus\Gamma(x)$, then
whenever $N_\gamma$ is a subset of the domain of $\gamma$ with $\mathcal{H}^1(N_\gamma)=0$, 
we can find a sequence of points $x_i\in\gamma\setminus [E_\eps(x)\cup\gamma(N_\gamma)]$ such that 
$x_i\to x$ as $i\to\infty$. Let $\Gamma(y)$ be the analogous subfamily of curves with respect to the point $y$;
then $\sigma_{xy}(\Gamma(x)\cup\Gamma(y))=0$. 
Let $(g_i)$ be a $\BV_{\AM}$-upper bound for $u$.
For $\gamma\in\Gamma_{xy}\setminus[\Gamma(x)\cup\Gamma(y)]$, we set $N_\gamma$ to be the set of points
in the domain of $\gamma$ that forms the exceptional set in the condition~\eqref{eq:AM-uppr}, and we select 
the sequences $x_i, y_i$ as above. Then
we have that
\[
|u(x)-u(y)|-2\eps\le \liminf_{i\to\infty} |u(x_i)-u(y_i)|\le \liminf_{i\to\infty}\int_\gamma g_i\, ds.
\]
Therefore, for $x,y\in X\setminus N$ and for each $\gamma\in \Gamma_{xy}\setminus(\Gamma(x)\cup\Gamma(y))$,
we have 
\[
|u(x)-u(y)|-2\eps\le \liminf_{i\to\infty}\int_\gamma g_i\, ds.
\]
We then have by Fatou's lemma
and~\eqref{pencil2} that
\begin{equation*}
\begin{split}
|u(x)-u(y)|-2\eps\le  & 
\int_{\Gamma_{xy}} \liminf_{i\to\infty}\int_\gamma g_i\, ds\,  d\sigma_{xy}(\gamma)\\
\leq &\liminf_{i\to\infty} \int_{\Gamma_{xy}} \int_\gamma g_i\, ds d\sigma_{xy}(\gamma)\\
\leq & C\liminf_{i\to\infty}\int_{C B_{x,y}}g_i(z)R_{xy}(z)\, d\mu(z)\\
 \le &\int_{C B_{x,y}}R_{xy}(z)\, d\nu(z)\\
\le & C(I_{C B_{x,y}}\nu(x)+I_{C B_{x,y}}\nu(y)),
\end{split} 
\end{equation*}
where $\nu$ is the Radon measure as in Remark~\ref{rem:truncation}. The constant $C$ in the above does
not depend on $\eps$; hence, by letting $\eps\to 0^+$ we obtain
\[
|u(x)-u(y)|\le C(I_{C B_{x,y}}\nu(x)+I_{C B_{x,y}}\nu(y))
\]
whenever $x,y\in X\setminus N$. 
For $x,y\in B$ with $R$ the radius of $B$, setting $B_i=B(x,2^{-i}Cd(x,y))$ for $i\in\N\cup\{0\}$, we see that
\begin{align*}
I_{CB_{x,y}}\nu(x)\le \int_{B(x,Cd(x,y))}\frac{d(x,z)}{\mu(B(x,d(z,x)))}\, d\mu(z)
  &\le C\, \sum_{i=0}^\infty \frac{2^{-i}Cd(x,y)}{\mu(B_i)}\, \nu(B_i)\\
  &\le C\, d(x,y)\, h_B(x)\, \sum_{i=0}^\infty 2^{-i},
\end{align*}
where
\[
h_B(x)=\sup_{0<r\le CR} \frac{\nu(B(x,r))}{\mu(B(x,r))}.
\]
Thus $h_B$ is a Haj\l asz gradient of $u$ in $B$, 
that is,
\[
|u(x)-u(y)|\le Cd(x,y)[h_B(x)+h_B(y)]
\]
for $\mu$-a.e.~$x,y\in B$,
and we have the weak inequality
\[
\mu(\{x\in B\, :\, h_B(x)>t\})\le C\, \frac{\nu(B)}{t} \text{ for }t>0.
\]
Thus $h_B\in L^q(B)$ for $0<q<1$, and hence $u\in M^{1,q}(B)$ in the sense of~\cite{Haj-Contem},
and so by~\cite[Corollary~8.9 of page~202]{Haj-Contem} or by~\cite[Proposition~2.4]{KLS}, we know that 
\[
\jint_B|u-u_B|\, d\mu\le C\, R\, \frac{\nu(B)}{\mu(B)}.
\]

%
%
The proof is then completed by taking a sequence of sequences 
$(g_i^j)_i$
 that are $\BV_{\AM}$-upper bound
of $u$ with corresponding measures $\nu_j$ such that $\lim_j\nu_j(2B)=\Vert D_{\AM}u\Vert(2B)$.
\end{proof}

From Proposition~\ref{SemmesAM}, Theorem~\ref{thm:semmesexist}, and
Proposition~\ref{prop:BV-AM-PI} we have the following.

\begin{thm}\label{thm:main1}
Let $\mu$ be a doubling measure on $X$. Then the following are equivalent:
\begin{enumerate}
\item $X$ supports a $1$-Poincar\'e inequality.
\item $X$ supports a Semmes pencil of curves.
\item $X$ supports an $\AM$-Poincar\'e inequality.
\end{enumerate}
In any (and therefore all) of the above, we have  $\BV(X)=\BV_{\AM}(X)$ and $N^{1,1}(X)=N^{1,1}_{\AM}(X)$.
\end{thm}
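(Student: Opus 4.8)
The plan is to close the cycle of implications $(1)\Rightarrow(2)\Rightarrow(3)\Rightarrow(1)$ using only results already assembled in this section, and then to read off the two function-space equalities as a corollary. First, $(1)\Rightarrow(2)$ is precisely the content of Theorem~\ref{thm:semmesexist}: under the assumption that $\mu$ is doubling and $X$ supports a $1$-Poincar\'e inequality, for each pair $x,y$ with $x\ne y$ there is a compact family of $C$-quasiconvex curves from $x$ to $y$ equipped with a probability measure satisfying the Riesz-type estimate~\eqref{pencil}, which is exactly a Semmes pencil in the sense of Definition~\ref{def:semmes}. Next, $(2)\Rightarrow(3)$ is Proposition~\ref{SemmesAM}: the existence of a Semmes pencil of curves forces the $\AM$-Poincar\'e inequality.

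The implication $(3)\Rightarrow(1)$ is the one that looks trivial but should be stated with a word of justification, since it amounts to matching the definitions. Given a function $u$ and an upper gradient $g$ of $u$, I would observe that the constant sequence $g_i\equiv g$ satisfies~\eqref{eq:AM-uppr} for \emph{every} non-constant compact rectifiable curve and \emph{every} pair of parameters in its domain (not merely for $\mathcal{H}^1$-a.e.\ pair), simply because $g$ is an upper gradient; hence $(g_i)$ is a $\BV_{\AM}$-upper bound of $u$ with $\liminf_{i\to\infty}\int_X g_i\,d\mu=\int_X g\,d\mu$. Feeding this sequence into the $\AM$-Poincar\'e inequality yields exactly the $1$-Poincar\'e inequality, with the same constants $C,\lambda$. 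This completes the equivalence of (1), (2) and (3).

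Finally, for the concluding sentence of the theorem: under any one of the three equivalent hypotheses we have in particular that $\mu$ is doubling and $X$ supports the $\AM$-Poincar\'e inequality, so Proposition~\ref{prop:BV-AM-PI} gives $\BV(X)=\BV_{\AM}(X)$ with comparable norms, and Proposition~\ref{prop:N1p-AM-PI} gives $N^{1,1}(X)=N^{1,1}_{\AM}(X)$ with comparable norms. (One could also invoke, for $(2)\Rightarrow(1)$ directly, the classical fact that a doubling space with a Semmes pencil satisfies a $1$-Poincar\'e inequality, but the route through (3) is self-contained within this paper.)

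I do not expect a genuine obstacle here: all of the analytic substance already resides in Theorem~\ref{thm:semmesexist} (the Min-Max construction of the pencil measure on the space of curve fragments, together with the modulus lower bound from~\cite{K}) and in Propositions~\ref{SemmesAM}, \ref{prop:BV-AM-PI} and~\ref{prop:N1p-AM-PI}. The only place demanding care is the bookkeeping in $(3)\Rightarrow(1)$ — namely verifying that a single upper gradient induces an admissible constant sequence for the $\AM$-Poincar\'e inequality — after which the theorem is a short diagram chase through the propositions cited above.
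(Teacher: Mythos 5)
Your proposal is correct and follows essentially the same route as the paper, which derives the theorem directly from Theorem~\ref{thm:semmesexist} (for $(1)\Rightarrow(2)$), Proposition~\ref{SemmesAM} (for $(2)\Rightarrow(3)$), and Propositions~\ref{prop:BV-AM-PI} and~\ref{prop:N1p-AM-PI} (for the function-space equalities), with $(3)\Rightarrow(1)$ left implicit as the ``a priori stronger'' direction. Your explicit verification that a single upper gradient yields an admissible constant sequence is exactly the intended (and in the paper unwritten) justification of that last implication.
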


\end{document}